\newtheorem{thm}{Theorem}[section]
\newtheorem{prop}[thm]{Proposition}
\newtheorem{cor}[thm]{Corollary}
\newtheorem{question}[thm]{Question}
\theoremstyle{remark}
\newtheorem{rem}[thm]{Remark}
\newtheorem{example}[thm]{Example}
\theoremstyle{definition}
\newtheorem{defi}[thm]{Definition}
\newcommand{\Z}{\mathbb{Z}}
\newcommand{\Q}{\mathbb{Q}}
\newcommand{\R}{\mathbb{R}}
\newcommand{\N}{\mathbb{N}}
\DeclareMathOperator{\rk}{rk}
\DeclareMathOperator{\tors}{tors}
\def\epsilon{\varepsilon}
\DeclareMathOperator{\res}{res}
\DeclareMathOperator{\Aut}{Aut}
\DeclareMathOperator{\ModZsn}{{\sf Mod}_\Z^{\sf sn}}
\DeclareMathOperator{\SBP}{{\sf SBP}}
\def\linfz#1{%
  L^\infty(#1;\Z)}
\def\fcl#1{%
  [#1]}
\def\args{\;\cdot\;}
\def\sv#1{\|#1\|}
\def\isv#1{\|#1\|_\Z}
\newcommand\norm{\bBigg@{0.8}}
\newcommand{\ifsv}[2][norm]{\csname #1l\endcsname\bracevert\!#2\!%
                            \csname #1r\endcsname\bracevert}
\newcommand{\stisv}[2][norm]{\indnorml[#1]{#2}{\Z}{\infty}}
\newcommand{\indnorml}[4][flex]{\csname #1l\endcsname\|#2%
                                 \csname #1r\endcsname\|_{#3}^{#4}\mathclose{}}
\def\actson{\curvearrowright}
\def\ucov#1{%
  \widetilde{#1}
}
\def\draftinfo{}
\author{Daniel Fauser}
\author{Stefan Friedl}
\author{Clara L\"oh}
\title[Integral approximation of simplicial volume of graph manifolds]
      {Integral approximation of simplicial volume\\ of graph manifolds}
\date{\today.\ \copyright{\ D.~Fauser, S.~Friedl, C.~L\"oh 2017}. 
    This work was supported by the CRC~1085 \emph{Higher Invariants} 
    (Universit\"at Regensburg, funded by the DFG).
    \draftinfo\\
     MSC~2010 classification: 55N10, 57N65, 57M27}
\begin{document}

\begin{abstract}
  Graph manifolds are manifolds that decompose along tori into pieces
  with a tame $S^1$-structure.  In this paper, we prove that the
  simplicial volume of graph manifolds (which is known to be zero) can be
  approximated by integral simplicial volumes of their finite
  coverings. This gives a uniform proof of the vanishing of rank
  gradients, Betti number gradients and torsion homology gradients for
  graph manifolds.
\end{abstract}

\maketitle


\section{Introduction}

Many classical invariants from topology and group theory admit 
meaningful gradient invariants, which are defined by a
stabilisation and normalisation process over finite coverings and finite
index subgroups, respectively.  For example, Betti number gradients
coincide in many cases with the corresponding
$L^2$-invariants~\cite{lueckapprox}. 

We will consider an approximation problem for simplicial
volume: The simplicial volume~$\sv M$ of an oriented closed connected
(topological) $n$-manifold is the infimum of the $\ell^1$-norms of fundamental
cycles with $\R$-coefficients of~$M$. A related gradient
invariant is the stable integral simplicial volume~$\stisv M$ of~$M$,
defined as the infimum of the normalised integral simplicial volumes
of finite coverings of~$M$ (Section~\ref{sec:simvol}). 

\begin{question}[integral approximation problem for simplicial volume]
  For which oriented closed connected manifolds~$M$ do we have
  \[ \sv M = \stisv M \;?
  \]
\end{question}

In the present paper, we prove that the simplicial volume of graph
manifolds satisfies integral approximation. We introduce a notion of
graph manifolds as manifolds that decompose along tori into pieces
that admit a tame $S^1$-structure (Section~\ref{sec:graphmfds}); our
definition of graph manifolds excludes the case of spherical
$3$-mani\-folds, but it does include all other classical graph
manifolds in dimension~$3$ as well as higher-dimensional examples.

\begin{thm}\label{thm:main}
  Let $M$ be an oriented closed connected graph manifold (in the sense
  of Definition~\ref{def:graphmfd}) with residually finite fundamental
  group. Then
  \[ \sv M = \stisv M = 0.
  \]
  More generally: Let $(\Gamma_j)_{j \in \N}$ be a descending chain of finite
  index subgroups of~$\pi_1(M)$ with trivial intersection and let $(M_j)_{j \in \N}$
  be the corresponding tower of finite coverings. Then
  \[ \sv M = \lim_{j \rightarrow \infty} \frac{\isv{M_j}}{[\pi_1(M) : \Gamma_j]} = 0.
  \]
\end{thm}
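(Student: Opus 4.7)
The vanishing $\sv M = 0$ is classical for graph manifolds: each piece with tame $S^1$-structure has vanishing relative simplicial volume (the $S^1$-factor carries a non-trivial amenable action), and the gluing tori have amenable fundamental group, so Gromov's additivity theorem for amenable glueings applies. Moreover, multiplicativity of simplicial volume under finite covers gives
\[
\sv M \;=\; \frac{\sv{M_j}}{[\pi_1(M):\Gamma_j]} \;\le\; \frac{\isv{M_j}}{[\pi_1(M):\Gamma_j]}
\]
for every $j$, so the liminf of the normalised integral simplicial volumes is bounded below by $\sv M = 0$. The real content of the theorem is therefore a matching upper bound, and my plan is to construct, for each $\varepsilon > 0$ and each sufficiently large $j$, an explicit integral fundamental cycle of $M_j$ of $\ell^1$-norm at most $\varepsilon \cdot [\pi_1(M):\Gamma_j]$.

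The construction would proceed piecewise. Pulling the torus decomposition of~$M$ back along $M_j \to M$ yields a decomposition of~$M_j$ along torus preimages into pieces $V_{i,j} \to V_i$ that inherit the tame $S^1$-structure of the~$V_i$. For each piece I would build a relative integral fundamental cycle as an Eilenberg--Zilber-type product of a fixed integral triangulation of the base of the $S^1$-structure on~$V_i$ with a short integral cycle representing the class of the lifted $S^1$-fiber. The simplex count of such a product cycle is bounded in terms of the base triangulation of~$V_i$ alone, independently of~$j$, whereas the covering degree $[\pi_1(M):\Gamma_j]$ tends to infinity; the hypothesis $\bigcap_j \Gamma_j = \{1\}$ is precisely what forces the $S^1$-direction to unwrap enough on deep enough covers. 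Summing over the finitely many pieces then yields a piecewise cycle whose norm grows as $o([\pi_1(M):\Gamma_j])$.

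These piecewise cycles will not match on the torus preimages, so there is a boundary discrepancy which is an integral cycle on each torus preimage $T_{\ell,j}$. Since the tori $T_\ell$ (and hence their finite covers) have virtually abelian, in particular amenable, fundamental group, they satisfy the uniform boundary condition $\ubc$ in the sense of Matsumoto--Morita: integral boundaries admit integral fillings of $\ell^1$-norm controlled by a universal constant times the norm of the boundary. Applying these fillings absorbs the discrepancies into correction chains of sublinear norm, and glueing the piece cycles with these fillings produces the desired integral fundamental cycle of~$M_j$.

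The hard part will be coordinating the piecewise constructions so that their boundary traces---which are what must be filled via $\ubc$---are themselves sufficiently small: the base triangulations, the lifted fiber cycle, and the combinatorics of the cover must be arranged so that the norm of $\sum_i \partial z_{i,j}$ on each $T_{\ell,j}$ grows strictly more slowly than $[\pi_1(M):\Gamma_j]$, so that the $\ubc$-filling preserves the sublinear estimate. A related subtlety is that a tame $S^1$-piece need not be a genuine $S^1$-bundle (there may be exceptional fibers or seam behaviour at the boundary), so the product construction will have to be adapted to handle a lower-dimensional singular stratum whose contribution can be shown to be negligible after passing to the cover.
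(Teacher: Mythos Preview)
Your plan is exactly the direct construction the authors allude to and then set aside in their ``Strategy of proof'': they remark that building efficient integral cycles on finite covers by hand, exploiting the $S^1$-structure, is tempting but that ``the bookkeeping for the glueing steps would be quite tricky''. Instead they prove the more flexible Theorem~\ref{thm:mainergodic}, which asserts $\ifsv{M,\partial M}^\alpha = 0$ for \emph{every} essentially free standard $\pi(M)$-space~$\alpha$; the induction over the graph and the UBC-filling on the gluing tori are carried out once, in this parametrised setting (Propositions~\ref{prop:simvoltame}, \ref{prop:glueing}, \ref{prop:selfglueing}). Theorem~\ref{thm:main} then drops out by specialising~$\alpha$ to the profinite completion of~$\pi_1(M)$ (giving $\stisv M = \ifsv M^\alpha = 0$ via~\cite[Theorem~6.6]{loehpagliantini}) and to the coset tree of the chain~$(\Gamma_j)_j$ (giving the limit statement via~\cite[Theorem~2.6]{flps}). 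The parametrised world packages all finite covers at once and the comparison theorems do the translation back, so none of your cover-by-cover bookkeeping is needed.

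Two concrete gaps in your sketch deserve flagging. First, the UBC you invoke: Matsumoto--Morita is a statement about \emph{real} chains (vanishing of bounded cohomology for amenable groups); what your construction needs is UBC in the \emph{integral} chain complex of each torus cover~$T_{\ell,j}$, with a constant uniform in~$j$. That is a different statement and does not follow from amenability alone. The paper's Proposition~\ref{prop:UBClocal} supplies the parametrised analogue, but only for essentially free parameter spaces, which excludes the finite coset spaces that would encode your integral covers. Second, for a non-normal chain~$(\Gamma_j)$ the preimage of a piece~$V_i$ in~$M_j$ splits into components governed by conjugates~$g\Gamma_j g^{-1}$, and your estimate requires the fibre degree in \emph{every} such component to tend to infinity; the hypothesis $\bigcap_j\Gamma_j=\{1\}$ does not obviously force this uniformly over all conjugates, so your sentence ``the hypothesis \dots\ is precisely what forces the $S^1$-direction to unwrap'' hides a real argument. (This is essentially the freeness of the coset-tree action, which the paper gets to invoke as a black box.) Your acknowledged ``subtlety'' about the singular strata of a tame $S^1$-structure is likewise a place where the paper does nontrivial work, namely Proposition~\ref{prop:simvoltame}, again using parametrised UBC rather than an integral one.
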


Vanishing of the ordinary simplicial volume follows already 
from work of Gromov~\cite{vbc}, Yano~\cite{yano}, and Soma~\cite{soma}.

In addition, the following classes of manifolds are known to
satisfy integral approximation for simplicial volume: closed surfaces of genus at
least~$1$~\cite{vbc}, closed hyperbolic
$3$-manifolds~\cite{flps}, closed aspherical manifolds with residually
finite amenable fundamental group~\cite{flps}, compact manifolds with
``non-trivial'' $S^1$-action~\cite{fauser}, as well as certain glueings along
tori~\cite{fauserloeh}. In contrast, approximation fails uniformly for
higher-dimensional hyperbolic manifolds~\cite{ffm} and it fails for closed
manifolds with non-abelian free fundamental group~\cite[Remark~3.9]{flps}.

\begin{question}
  Do all oriented closed connected $3$-manifolds~$M$ with infinite
  fundamental group satisfy the approximation identity~$\sv M = \stisv
  M$ for simplicial volume?
\end{question}

\subsection*{Applications}

Vanishing of stable integral simplicial volume, in particular,
provides a uniform proof of the vanishing of the following gradient
invariants:

\begin{cor}[gradient invariants of graph manifolds]\label{cor:gradient}
  Let $M$ be an oriented closed connected graph manifold with
  residually finite fundamental group. Then the following hold:
  \begin{enumerate}
  \item The rank gradient of~$\pi_1(M)$ is~$0$. 
  \item If $R$ is a principal ideal domain and~$k \in \N$, then
    the $\rk_R H_k(\args;R)$-gradients of~$M$ are~$0$.
  \item If $k\in\N$, then the $\log \;\mathopen| \tors H_k(\args;\Z)|$-gradients  
    of~$M$ are~$0$.
  \item The Euler characteristic of~$M$ is~$0$.
  \end{enumerate}
\end{cor}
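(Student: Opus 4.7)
The plan is to deduce each vanishing statement from Theorem~\ref{thm:main} by combining it with well-known linear upper bounds of the form $I(N) \le C_n \cdot \isv N$ for oriented closed connected $n$-manifolds~$N$ and a suitable invariant~$I$, where $C_n$ depends only on~$n = \dim M$.  Such bounds ultimately stem from the fact that a nearly optimal integral fundamental cycle of~$N$ yields a combinatorial model of~$N$ with correspondingly few simplices in each dimension.

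Using residual finiteness of $\pi_1(M)$, I first fix a descending chain $(\Gamma_j)_{j \in \N}$ of finite index subgroups with trivial intersection and let $(M_j)_{j \in \N}$ be the associated tower of finite coverings.  Theorem~\ref{thm:main} then yields
\[
   \lim_{j \to \infty} \frac{\isv{M_j}}{[\pi_1(M) : \Gamma_j]} = 0.
\]

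Next I invoke the following classical estimates for oriented closed connected $n$-manifolds~$N$: the minimal number of generators of $\pi_1(N)$ is at most $C_n \cdot \isv N$ (taking a presentation from the $2$-skeleton of a combinatorial model); for any principal ideal domain $R$ and any $k \in \N$, $\rk_R H_k(N;R) \le C_n \cdot \isv N$ (same model, counting $k$-simplices); for any $k \in \N$, $\log |\tors H_k(N;\Z)| \le C_n \cdot \isv N$, which is Sauer's inequality; and $|\chi(N)| \le C_n \cdot \isv N$, via the alternating sum of the simplex counts.  Applying each inequality to~$M_j$, dividing by~$[\pi_1(M):\Gamma_j]$, and passing to the limit $j \to \infty$ gives the vanishing of the corresponding gradient.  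For the Euler characteristic, multiplicativity under finite coverings additionally forces $\chi(M) = 0$.

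The main obstacle is the torsion estimate, i.e., Sauer's bound on $\log|\tors H_k(\args;\Z)|$ by integral simplicial volume: unlike the bounds for ranks, for the number of generators of~$\pi_1$, and for the Euler characteristic, it is not a direct simplex count but requires a genuine chain-level argument that trades $\ell^1$-norm for logarithmic torsion.  The remaining steps reduce to straightforward bookkeeping on a combinatorial model with controlled number of simplices in each dimension produced by a nearly optimal integral fundamental cycle of~$M_j$.
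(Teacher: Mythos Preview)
Your proposal is correct and follows essentially the same route as the paper: invoke Theorem~\ref{thm:main} to get $\lim_j \isv{M_j}/[\pi_1(M):\Gamma_j]=0$ along a residual chain, then feed this into the standard linear bounds $d(\pi_1 N),\ \rk_R H_k(N;R),\ \log|\tors H_k(N;\Z)|,\ |\chi(N)| \le C_n\cdot\isv N$ and use multiplicativity of~$\chi$ for part~(4). The only cosmetic difference is your heuristic sourcing of these inequalities via a ``combinatorial model built from a nearly optimal fundamental cycle''; the paper instead simply cites the chain-level/duality arguments in~\cite{loehrg} and~\cite[Lemma~4.1, proof of Theorem~1.6]{flps}, which do not actually pass through a CW-model of~$N$---but since you treat these bounds as black boxes anyway, this does not affect the argument.
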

\begin{proof}
  By Theorem~\ref{thm:main}, we have~$\stisv M = 0$.  The rank
  gradient estimate hence follows from the fact that stable integral
  simplicial volume is an upper bound for the rank
  gradient~\cite{loehrg}.  Alternatively, one can also apply the proof
  strategy for Theorem~\ref{thm:main} via
  Theorem~\ref{thm:mainergodic} to derive the triviality of the rank
  gradient from the corresponding results on
  cost~\cite[Theorem~1]{abertnikolov}\cite[Chapter~29--37]{kechrismiller}.

  For the homology and torsion homology gradients, we consider a
  descending chain~$(\Gamma_j)_{j \in \N}$ of finite index subgroups
  of~$\pi_1(M)$ with trivial intersection~$\bigcap_{j \in \N}
  \Gamma_j$; because $\pi_1(M)$ is assumed to be residually finite,
  such chains do exist. Let $(M_j)_{j \in \N}$ be the corresponding
  tower of covering spaces of~$M$.  Then $\rk_R H_k(M_j;R) \leq
  \isv {M_j}$ for all~$j \in \N$~\cite[Lemma~4.1]{flps} and hence
  (Theorem~\ref{thm:main})
  \[ \limsup_{j \rightarrow \infty} \frac{\rk_R H_k(M_j;R)}{[\pi_1(M):\Gamma_j]}
  \leq \lim_{j \rightarrow \infty} \frac{\isv {M_j}}{[\pi_1(M) : \Gamma_j]}
  = 0.
  \]
  Moreover, we have~\cite[proof of Theorem~1.6]{flps}
  \[ \limsup_{j \rightarrow \infty} \frac{\log\;\mathopen|\tors H_k(M_j;R)|}{[\pi_1(M):\Gamma_j]}
  \leq
  \log (n+1) \cdot {{n+1} \choose {k+1}} \cdot 
  \lim_{j \rightarrow \infty} \frac{\isv {M_j}}{[\pi_1(M) : \Gamma_j]}   
  \]
  (where $n := \dim M$) and the right-hand side is zero by Theorem~\ref{thm:main}.
    
  The Euler characteristic is the alternating sum of the rational
  Betti numbers and it is multiplicative under finite coverings;
  therefore, the proof of vanishing of the $\dim_\Q H_*(\args;\Q)$-gradients
  shows that $\chi(M) = 0$.
\end{proof}

In dimension~$3$, these consequences can also be proved by a direct
calculation~\cite[Theorem~3.14 and Corollary~3.18]{Meumertzheim}.
Furthermore, for $3$-manifolds, Corollary~\ref{cor:gradient}~(3)
is also a consequence of recent work by L\^{e}~\cite{Le}.

\begin{rem}[$L^2$-Betti numbers of graph manifolds]
  Let $M$ be an oriented closed connected graph manifold with
  residually finite fundamental group and $k \in \N$. Then the
  triviality of the $\dim_\Q H_k(\args;\Q)$-gradient of~$M$
  (Corollary~\ref{cor:gradient}) and L\"uck's approximation
  theorem~\cite{lueckapprox} imply that
  \[ b_k^{(2)}(M) = 0.
  \]
  Conversely, one can also prove the vanishing of the $L^2$-Betti
  numbers via $L^2$-methods (proceeding along the lines of our
  inductive proof of Theorem~\ref{thm:mainergodic}) and then deduce
  the vanishing of the $\dim_\Q H_k(\args;\Q)$-gradients via L\"uck's
  approximation theorem.
\end{rem}

These results provide evidence for an affirmative answer to Gromov's
question~\cite[p.~232]{gromovasymptotic} whether the vanishing of
simplicial volume of aspherical closed manifolds implies the vanishing
of their $L^2$-Betti numbers/Euler characteristic. 

\subsection*{Strategy of proof}

It is tempting to try to prove Theorem~\ref{thm:main} by constructing
good finite coverings by hand, taking advantage of the
$S^1$-structure.  However, the bookkeeping for the glueing steps would
be quite tricky. It is much more efficient to pass to a more general
setting with more flexible coefficient modules (see
Section~\ref{sec:simvol} for notation and terminology) that has better
inheritance properties and allows for a straightforward induction
proof. In this extended setting, we prove the following vanishing
result:

\begin{thm}\label{thm:mainergodic}
  Let $M$ be an oriented compact connected graph manifold with
  fundamental group~$\Gamma$, and let $\alpha = \Gamma \actson X$ be
  an essentially free standard $\Gamma$-space.  Then
  \[ \ifsv{M,\partial M} ^\alpha = 0.
  \]
\end{thm}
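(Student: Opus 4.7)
The plan is to induct on the number of pieces in the graph decomposition of $M$, reducing $\ifsv{M,\partial M}^\alpha = 0$ to two ingredients: (i) a base case for individual pieces carrying a tame $S^1$-structure and (ii) a glueing step along incompressible tori.

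For the base case, I would take an oriented compact connected piece $N$ with tame $S^1$-structure and possibly toral boundary. Tameness provides, away from a lower-dimensional exceptional set, an honest $S^1$-fibration on $N$. The Yano--Soma argument for classical simplicial volume uses self-coverings of the $S^1$-direction of arbitrarily large degree to produce $\R$-fundamental cycles (relative to~$\partial N$) of arbitrarily small $\ell^1$-norm. To promote this to the parametrised integral setting, I would exploit the essential freeness of $\alpha = \Gamma \actson X$: rather than dividing by an integer, one averages over~$X$ by means of measurable cocycles that encode the $S^1$-rotations, with the resulting integrality corrections absorbed into the coefficient module $L^\infty(X;\Z)$. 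The exceptional locus has strictly smaller dimension and can be excised along a collar, at the cost of an extra toral boundary component that the amenable glueing step below will be able to neutralise. This yields $\ifsv{N,\partial N}^\alpha = 0$.

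For the inductive step, I would invoke an amenable-glueing result for the parametrised simplicial volume in the spirit of~\cite{fauserloeh}: whenever $M = M_1 \cup_T M_2$ is a splitting along an incompressible torus~$T$, sub-additivity
\[ \ifsv{M,\partial M}^\alpha \leq \ifsv{M_1,\partial M_1}^{\alpha_1} + \ifsv{M_2,\partial M_2}^{\alpha_2} \]
holds, where $\alpha_j$ denotes $\alpha$ restricted along $\pi_1(M_j) \hookrightarrow \Gamma$; amenability of~$\pi_1(T)$ guarantees that the seam along $T$ contributes no error. Iterating over the pieces of the graph decomposition and applying the base case to each $S^1$-piece then gives $\ifsv{M,\partial M}^\alpha = 0$.

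The main obstacle is the base case in the presence of boundary: the Yano-style rotation trick has to be carried out relatively, producing cycles that are small in the interior while remaining sufficiently canonical on the boundary tori for the subsequent glueing to absorb what is left. Concretely, the $S^1$-foliations of adjacent pieces need not match along a common boundary torus, so the relative cycles must be arranged so that pairwise cancellation, rather than mere additivity of norms, occurs upon glueing. This is where the flexibility of the coefficient module $L^\infty(X;\Z)$ — in particular the ability to average over the essentially free $\Gamma$-action on $X$ — becomes indispensable, playing the role of the integer divisions that are unavailable for honest integral fundamental cycles and simultaneously providing the boundary trivialisations needed for the induction.
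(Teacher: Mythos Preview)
Your overall architecture matches the paper: induct on the pieces, treat each tame $S^1$-piece as a base case, then glue along incompressible tori. Where your proposal has a real gap is the mechanism behind both steps. You invoke ``amenability of~$\pi_1(T)$ guarantees that the seam contributes no error'' and later worry that ``pairwise cancellation, rather than mere additivity of norms'' must be arranged on the boundary tori. The paper resolves exactly this concern, and it does so \emph{not} by arranging cancellation but by filling: the key lemma (Proposition~\ref{prop:UBClocal}) is that $C_*(T^{n-1};\alpha)$ satisfies a parametrised uniform boundary condition whenever $\alpha$ is essentially free. Given small relative fundamental cycles $c_1$, $c_2$ of the two pieces, the restriction $(\partial c_1)|_T + (\partial c_2)|_T$ is a null-homologous $(n-1)$-cycle on the torus (it is the difference of two parametrised fundamental cycles of~$T$), and UBC produces a filling $b$ with $|b|_1 \leq K\cdot|(\partial c_1)|_T + (\partial c_2)|_T|_1 \leq K\cdot 2(n+1)\varepsilon$. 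Then $c_1 + c_2 - b$ is the desired small relative cycle for the glued manifold. No compatibility of the $S^1$-fibrations across~$T$ is ever needed. The self-glueing (HNN) case, which you do not mention, is handled identically.

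The same UBC tool also closes your base case. Your plan to ``excise the exceptional locus along a collar, at the cost of an extra toral boundary component'' does not work as stated: the exceptional pieces~$N_j$ in the definition of a tame $S^1$-structure are $n$-manifolds homotopy equivalent to tori of dimension at most~$n-2$, and their boundaries~$\partial N_j$ need not be $(n-1)$-tori, so the glueing propositions do not apply to them. What the paper does instead is: on the honest $S^1$-bundle part $M' = M \setminus \bigcup N_j^\circ$ one builds a small relative cycle~$z'$ (Proposition~\ref{prop:bundlecase}, via Yano's hollowings and the fact that essentially free $\Z$-actions give $\ifsv{S^1}^{\alpha_\Delta} = 0$ for each trivialised piece --- this is Schmidt's argument, not quite the ``self-covering'' picture you describe); then each $(\partial z')|_{N_j}$ is null-homologous in~$C_{n-1}(N_j;\alpha_j)$ for dimension reasons, and UBC on the homotopy-torus~$N_j$ provides a small filling~$b_j$. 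Subtracting the~$b_j$ yields the small relative cycle for~$(M,\partial M)$. So the missing idea throughout is the parametrised UBC for tori; once you have it, your ``main obstacle'' dissolves.
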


Theorem~\ref{thm:mainergodic} can be proved by induction over the
graph structure; the base case of a manifold with tame $S^1$-structure
can be solved using methods by Fauser~\cite{fauser}, the induction
step requires a glueing argument similar to previous results by Fauser
and L\"oh~\cite{fauserloeh}. The main contribution of the present
paper is to give a proper formalisation of this induction argument and
adapting the glueing argument to the case of multiple boundary
components; this will be done in the setting of fundamental groupoids
and local coefficients.

Theorem~\ref{thm:main} is then a consequence of the fact that taking
the profinite completion of the fundamental group as coefficient action
leads to the stable integral simplicial volume.

\subsection*{Organisation of this article}

We first explain the setup of generalised graph manifolds
(Section~\ref{sec:graphmfds}) and of the simplicial volumes needed for
the proof of the main theorem (Section~\ref{sec:simvol}). The proof of
Theorem~\ref{thm:main} and Theorem~\ref{thm:mainergodic} is given in
Section~\ref{sec:proofs}.

\section{Graph manifolds}\label{sec:graphmfds}

Graph manifolds are (topological) manifolds that can be decomposed along tori into
pieces that admit a tame $S^1$-structure. In order to keep track of
such decompositions/glueings and facilitate induction proofs, we will
use a more formal description via graphs.

\subsection{Tame $S^1$-structures}

We will first introduce the basic building blocks of graph manifolds, namely 
manifolds that admit a tame $S^1$-structure.

Recall that a subspace~$Y$ of a topological space~$X$ is
\emph{$\pi_1$-injective} if for every basepoint~$y \in Y$ the
map~$\pi_1(Y,y) \longrightarrow \pi_1(X,x)$ induced by the inclusion
is injective.

\begin{defi}[tame $S^1$-structure]\label{def:tame}
  A compact manifold~$M$ of dimension~$n \in \N$ admits a \emph{tame $S^1$-structure}
  if there exists an~$m \in \N$ and pairwise disjoint compact $n$-dimensional
  submanifolds~$N_1, \dots, N_m$ of~$M^\circ$ with the following properties:
  \begin{itemize}
  \item The complement~$M' := M \setminus \bigcup_{j=1}^m N_j^\circ$ admits a smooth structure
    and a smooth $S^1$-fibre bundle structure~$M' \longrightarrow B$ 
    over a compact smooth $(n-1)$-manifold~$B$ (possibly non-oriented and possibly with boundary)
    with $\pi_1$-injective (in~$M$) fibres.
  \item For each~$j \in \{1,\dots,m\}$ the manifold~$N_j$ is homotopy equivalent to
    a torus of dimension at most~$n-2$ and $N_j$ is a $\pi_1$-injective subspace of~$M$. 
  \end{itemize}
\end{defi}

\begin{example}\label{exa:asphericaltame}
  If $M$ is an aspherical oriented closed connected smooth manifold
  that has a smooth free $S^1$-action, then $M$ admits a tame
  $S^1$-structure in the sense of Definition~\ref{def:tame}: By the
  slice theorem, the canonical projection~$M \longrightarrow M/S^1$
  and the given $S^1$-action form an $S^1$-principal bundle over the
  smooth base manifold~$M/S^1$~\cite[Theorem~15.3.4]{tomdieck}. Moreover, in this
  situation, the corresponding fibres (i.e., the orbits of the action)
  are $\pi_1$-injective~\cite[Corollary~1.43]{lueckl2}.
\end{example}

\begin{example}[Seifert manifolds]\label{exa:seiferttame}
  Every compact Seifert $3$-manifold that is \emph{not} finitely
  covered by~$S^3$ admits a tame $S^1$-structure. Indeed, for the
  $N_j$ we take the tubular neighbourhoods around the singular fibres
  provided by the definition of a Seifert $3$-manifold. The regular
  fibres are $\pi_1$-injective~\cite[Lemma~3.2]{scott} and thus the
  first condition above is satisfied. Furthermore, by definition of a
  Seifert $3$-manifold each $N_j$ contains a regular fibre $F$ such
  that the inclusion induced map $\Z\cong \pi_1(F)\longrightarrow \pi_1(N_j) \cong
  \Z$ is a monomorphism. It follows that the $N_j$ are also
  $\pi_1$-injective in $M$, thus the second condition above is
  satisfied.
\end{example}

\subsection{Graphs}

We formalise the glueing of multiple manifolds with multiple boundary
components via graphs.  In the following, we will use unoriented
graphs, possibly with multi-edges and loops. More precisely, we model
graphs by their vertices, edges, and the map that assigns the incident
vertices to the edges:

\begin{defi}[graph]\label{def:graph}
  A \emph{graph} is a triple~$X = (V,E,\partial)$,
  where $V$ and $E$ are sets (the sets of \emph{vertices}
  and \emph{edges} of~$X$, respectively) and $\partial$
  is a map of the type
  \[ \partial \colon E
     \longrightarrow \bigl\{ \{v,w\} \bigm| v,w \in V \bigr\}.
  \]
\end{defi}

\subsection{Graphs of manifolds}

For the purpose of this paper, we will use the following terminology
and conventions on graphs of manifolds.  A graph of manifolds is a
graph, where each vertex is decorated with a manifold with a tame
$S^1$-structure and $\pi_1$-injective torus boundary components and
where each edge describes a glueing of two of these boundary
components.  For~$m \in \N$, we write~$T^m := (S^1)^m$ for the
standard $m$-torus. More precisely:

\begin{defi}[graph of manifolds]
  Let $n \in \N$.  A \emph{graph of $n$-dimensional manifolds} is a
  triple~$\Gamma = (X, (M_v)_{v \in V}, (f_e)_{e \in E})$
  consisting of the following components:
  \begin{itemize}
  \item A graph~$X = (V,E,\partial)$ in the sense of Definition~\ref{def:graph}
    with finite sets~$V$ and~$E$.
  \item A family~$(M_v)_{v\in V}$ of oriented connected $n$-manifolds that
    admit a tame $S^1$-structure (Definition~\ref{def:tame}) and whose
    boundary components are $\pi_1$-injective $(n-1)$-tori.
  \item A family~$(f_e)_{e \in E}$ of maps with the following properties:
    \begin{itemize}
    \item
      If $e \in E$ with $\partial e =\{v,w\}$ and $v \neq w$, then $f_e$ is a map
      (a homeomorphism onto its image)
      \[ f_e \colon T^{n-1} \times \{v,w\} \longrightarrow \partial M_v \sqcup \partial M_w
      \]
      that induces an orientation-reversing homeomorphism between the corresponding
      boundary components of~$M_v$ and~$M_w$. 
    \item
      If $e \in E$ is a loop (i.e., $\partial e =\{v\}$ for some~$v\in V$),
      then $f_e$ is a map
      \[ f_e \colon T^{n-1} \times \{0,1\} \longrightarrow \partial M_v
      \]
      that maps the two tori to different boundary components of~$M_v$ and 
      induces an orientation-reversing homeomorphism between these boundary
      components.
    \item If $e,e' \in E$ satisfy~$e \neq e'$, then the
      images of~$f_e$ and $f_{e'}$ are disjoint. 
    \end{itemize}
  \end{itemize}
\end{defi}

Given a graph of manifolds, we can glue the vertex manifolds as specified
by the edge maps:

\begin{defi}[geometric realisation]
  Let $\Gamma = (X, (M_v)_{v\in V}, (f_e)_{e \in E})$ be
  a graph of $n$-manifolds with $X = (V,E,\partial)$.
  Then the \emph{geometric realisation of~$\Gamma$}
  is the oriented $n$-manifold
  \[ M(\Gamma) := \biggl(\coprod_{v \in V} M_v\biggr) \bigm/ \sim,
  \]
  where $\sim$ is the identification induced by the maps~$(f_e)_{e \in E}$. 
\end{defi}

Alternatively, one can describe the geometric realisation as follows: Let 
$\Gamma = (X = (V,E,\partial), (M_v)_{v\in V}, (f_e)_{e\in E})$ be a graph
of $n$-manifolds. Then the barycentric subdivision~$X'$
of~$X$ is a canonically oriented graph.
The graph~$\Gamma$ of manifolds gives rise to an $X'$-shaped diagram
of manifolds by associating the torus with each of the new vertices
obtained by barycentric subdivision and associating the (components of) 
the glueing maps with the edges. Then $M(\Gamma)$ is nothing but the colimit
of this diagram.

Applying the functor~$\pi_1$ (with a suitable treatment of basepoints)
to a graph~$\Gamma$ of manifolds gives rise to a graph of groups. Then
the fundamental group of the geometric realisation~$M(\Gamma)$ is
isomorphic to the fundamental group of this graph of
groups.

\subsection{Graph manifolds}

\begin{defi}[graph manifold]\label{def:graphmfd}
  Let $n \in \N$. A \emph{graph manifold} of dimension~$n$ is
  a manifold homeomorphic to the geometric realisation of a
  graph of $n$-dimensional manifolds; recall that all vertex
  manifolds of such a graph admit a tame $S^1$-structure.
\end{defi}

By construction, graph manifolds are orientable (through the 
orientation that is compatible with the vertex manifolds of an
appropriate graph of manifolds), compact,
and all boundary components are tori. Moreover, a graph manifold is
connected if and only if the underlying graph is connected. 

\begin{example}\label{exa:graphmfds}
  This definition subsumes the following classes of graph manifolds:
  \begin{itemize}
  \item Graph manifolds in the sense of $3$-manifolds, i.e.,
    orientable prime $3$-manifolds with empty or toroidal boundary
    such that all JSJ-com\-ponents are Seifert manifolds~\cite{AFW},
    except for $3$-manifolds that are finitely covered by~$S^3$
    (because Seifert JSJ-components are admissible vertex manifolds in
    our setting of graph manifolds,
    Example~\ref{exa:seiferttame}). Note that these manifolds all have
    residually finite fundamental group~\cite{hempel}.
  \item Higher-dimensional graph manifolds in the sense
    of Frigerio, Lafont, Sisto~\cite{frigeriolafontsisto}
    (because products of manifolds with~$S^1$ are admissible
    vertex manifolds).
  \end{itemize}
\end{example}

\section{Simplicial volumes}\label{sec:simvol}

Simplicial volumes of a manifold count the minimal number of singular simplices
that are needed to represent the fundamental class, weighted by a norm on
the coefficients. 

\subsection{Simplicial volume}

Classically, simplicial volume is defined with respect to constant
coefficients~\cite{vbc,mapsimvol,frigeriobc}:

\begin{defi}[simplicial volume]
  Let $n \in \N$ and let $M$ be an oriented compact connected $n$-manifold.
  Then the \emph{simplicial volume} and the \emph{integral simplicial volume}
  of~$(M,\partial M)$ are defined by
  \begin{align*}
    \sv {M,\partial M}
    := \inf \bigl\{ |c|_1
    \bigm| \; & \text{$c \in C_n(M;\R)$ is a relative $\R$-fundamental cycle}\\
              & \text{of~$(M,\partial M)$}
                \bigr\}, 
    \\
    \isv {M, \partial M} 
    := \inf \bigl\{ |c|_1
    \bigm| \; & \text{$c \in C_n(M;\Z)$ is a relative $\Z$-fundamental cycle}\\
              & \text{of~$(M,\partial M)$}
              \bigr\}
    .
  \end{align*}
  Here, $|\cdot|_1$ denotes the $\ell^1$-norm on~$C_n(M;\R)$ and $C_n(M;\Z)$, respectively,
  associated with the basis of singular simplices.
  
  The \emph{stable integral simplicial volume} of~$(M,\partial)$ is defined as
  \begin{align*}
    \stisv {M,\partial M} := \inf \Bigl\{ \frac {\isv {N,\partial N}} d \Bigm|
    \; & d \in \N_{>0} \text{ and}\\
       & \text{$N$ is a $d$-sheeted covering of~$M$}\Bigr\}.
  \end{align*}
\end{defi}

In the context of $L^2$-invariants or approximation questions for
simplicial volume, we have to pass to twisted/local coefficients
(Section~\ref{subsec:normedlocal}).

\subsection{Normed local coefficients}\label{subsec:normedlocal}

The definition of integral foliated simplicial volume involves twisted
coefficients~\cite{gromovmetric,mschmidt,loehpagliantini}. However, in the context
of manifolds with multiple boundary components, it is more convenient
to work in the framework of local coefficients than in the framework
of twisted coefficients. Therefore, we will briefly recall local
coefficients~\cite{steenrod} and their relation with twisted coefficients.

\begin{defi}[normed local coefficient system]
  Let $M$ be a topological space; we denote the fundamental
  groupoid of~$M$ by~$\pi(M)$~\cite[Chapter~2.5]{tomdieck}.
  A \emph{normed local coefficient system on~$M$}
  is a functor
  \[ \pi(M)  \longrightarrow \ModZsn,
  \]
  where $\ModZsn$ is the category of all semi-normed $\Z$-modules
  (and norm-non-increasing homomorphisms).
\end{defi}

\begin{defi}[homology with local coefficients]
  Let $M$ be a topological space and let $L \colon \pi(M) \longrightarrow \ModZsn$
  be a normed local coefficient system on~$M$. The \emph{chain complex~$C_*(M;L)$
  of~$M$ with local coefficients in~$L$} is defined as follows:
  For~$n \in \N$, we set
  \[ C_n(M;L) := \bigoplus_{x \in M} \bigoplus_{\sigma \in S_n(M,x)} L(x) \cdot \sigma
  \]
  where $S_n(M,x)$ is the set of all singular $n$-simplices~$\sigma \colon \Delta^n
  \longrightarrow M$ in~$M$ with~$\sigma(e_0) = x$. Moreover, we define the
  boundary operator
  \begin{align*}
    \partial \colon C_n(M;L) \longrightarrow C_{n-1}(M;L)
  \end{align*}
  by $\Z$-linear extension of
  \[ \partial( a \cdot \sigma)
     := \bigl(L(\sigma[0,1])\bigr)(a) \cdot \sigma[0]
     + \sum_{j=1}^n (-1)^j \cdot a \cdot \sigma[j]
  \]
  for all~$x \in M$, $a\in L(x)$, $\sigma \in S_{n}(M,x)$; here, $\sigma[j]$
  denotes the $j$-th face of~$\sigma$ and $\sigma[0,1]$ denotes the composition
  of~$\sigma$ with the canonical parametrisation~$[0,1] \longrightarrow \Delta^n$
  of the $0-1$-edge of~$\Delta^n$. Then $\partial \circ \partial = 0$.
  
	We define the \emph{$\ell^1$-semi-norm on~$C_*(M;L)$}
	as follows:
	For all~$n\in \N$ and every chain $\sum_{i=1}^k a_j\cdot\sigma_j \in C_n(M;L)$
	in \emph{reduced form},
	i.e., the $\sigma_j$'s are pairwise distinct, let
	\[ \Bigl|\sum_{i=1}^k a_j \cdot \sigma_j \Bigr|_1 :=
	      \sum_{i=1}^k |a_j|_{L(\sigma_j(e_0))},
	\]
	where~$|\cdot|_{L(\sigma_j(e_0))}$ denotes the given semi-norm on the 
	$\Z$-module~$L\bigl(\sigma_j(e_0)\bigr)$.
  
  The homology of~$C_*(M;L)$ is called \emph{homology of~$M$ with local
    coefficients in~$L$} and denoted by~$H_*(M;L)$.
  The semi-norm on~$C_*(M;L)$ induces the \emph{$\ell^1$-semi-norm on~$H_*(M;L)$} via
	\begin{align*}
		\|\alpha\|_1 := \inf\bigl\{ |c|_1 \bigm|~&c\in C_n(M;L),\ 
	        \partial c= 0,\ [c]= \alpha \in H_n(M;L) \bigr\}
	\end{align*}
	for all~$n\in \N$ and all~$\alpha\in H_n(M;L)$.
	
	Let~$U$ be a subspace of~$M$. Let~$I\colon \pi(U)\longrightarrow \pi(M)$
	be the induced functor of the 
	inclusion~$U\subset M$ and let~$L_U := L\circ I$.
	Then~$C_*(U;L_U)$ is a subcomplex of~$C_*(M;L)$ and we define the
	\emph{chain complex of M relative to~$U$ with local coefficients in~$L$} by
	\[ C_*(M,U;L) := C_*(M;L)/C_*(U;L_U);
	\]
	the boundary operator~$\partial$ on~$C_*(M;L)$ induces a well-defined
	boundary operator on~$C_*(M,U;L)$ that we denote by~$\partial$ again.
	Then we define the \emph{homology of~$M$ relative to~$U$ with local
	coefficients in~$L$} by
	\[ H_*(M,U;L) := H_*\bigl(C_*(M,U;L)\bigr)
	\]
	and we obtain the \emph{$\ell^1$-semi-norm on~$H_*(M,U;L)$} as follows:
	For all~$n\in \N$ and all~$\alpha\in H_n(M,U;L)$ let
	\begin{align*}
		||\alpha||_1 := \inf\bigl\{ |c|_1 \bigm|~&c\in C_n(M;L),  
	      \partial c\in C_{n-1}(U;L_U), [c]= \alpha \in H_n(M,U;L) \bigr\}.
	\end{align*}
\end{defi}

\begin{rem}[local vs.\ twisted coefficients]
  Let $M$ be a path-connected topological space and let $x_0 \in M$;
  then $\Aut_{\pi(M)} x_0$ is nothing but the fundamental group~$\pi_1(M,x_0)$
  based at~$x_0$. 
  \begin{itemize}
  \item If $L \colon \pi(M) \longrightarrow \ModZsn$ is a normed
    local coefficient system on~$M$, then $L(x_0)$ has
    the structure of a normed right-$\pi_1(M,x_0)$-module.
  \item
    Conversely, if $A$ is a normed right-$\pi_1(M,x_0)$-module, then we can
    construct a local coefficient system~$L_A$ on~$M$ as follows: For
    all~$x \in M \setminus\{x_0\}$, we choose a path~$\gamma_x \colon [0,1]
    \longrightarrow M$ from~$x_0$ to~$x$ and we let~$\gamma_{x_0}$
    be the constant path at~$x_0$.
    \begin{itemize}
    \item For~$x \in M$, we set~$L_A(x) := A$.
    \item For~$[\gamma \colon x \rightarrow y]_* \in \pi(M)$, we set
      \begin{align*}
        L([\gamma]_*) \colon 
        L_A(x) = A& \longrightarrow A = L_A(y)
        \\
        a & \longmapsto a \cdot [\gamma_{x} * \gamma * \overline \gamma_y]_*.
      \end{align*}
    \end{itemize}
  \end{itemize}
  It is easy to verify that $L_{L(x_0)} \cong L$ and $L_A(x_0) = A$.
\end{rem}

\begin{prop}[homology with local vs.\ twisted coefficients]\label{prop:homlocal}
  Let $M$ be a path-connected topological space that admits
  a universal covering~$\pi_M \colon \ucov M \longrightarrow M$, let $x_0 \in M$, and let
  $L \colon \pi(M) \longrightarrow \ModZsn$ be a normed
  local coefficient system on~$M$. Moreover, let $n \in \N$,
  let $D \subset \ucov M$ be a set-theoretic fundamental
  domain for the deck transformation action of~$\pi_1(M,x_0)$
  on~$\ucov M$, and let $(\gamma_x)_{x\in M}$ be a family of
  paths connecting~$x_0$ with the other points in~$M$ (such that $\gamma_{x_0}$
  is constant).
  Then
  \begin{align*}
    C_n(M;L) & \longrightarrow L(x_0) \otimes_{\Z \pi_1(M,x_0)} C_n(\ucov M;\Z)
    \\
    a \cdot \sigma & \longmapsto L([\gamma_{\sigma(e_0)}]^{-1}_*)(a) \otimes \widetilde \sigma 
    \\
    L(x_0) \otimes_{\Z \pi_1(M,x_0)} C_n(\ucov M;\Z) & \longrightarrow C_n(M;L)
    \\
    a \otimes \sigma & \longmapsto L([\gamma_{\pi_M \circ \sigma(e_0)}]_*)(a) \cdot \pi_M \circ \sigma
    \\
    \text{with~$\sigma(e_0) \in D$}&
  \end{align*}
  are mutually inverse isomorphisms that do not increase the norm.
  Here, $\widetilde \sigma$ denotes the unique $\pi_M$-lift of~$\sigma$ to~$\ucov M$
  with~$\widetilde \sigma(e_0) \in D$. These maps yield chain maps
  and hence induce mutually inverse isomorphisms
  \[ H_k(M;L) \cong H_k\bigl(M; L(x_0)\bigr)
  \]
  that are isometric with respect to the induced semi-norms on homology.
\end{prop}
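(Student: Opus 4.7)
The plan is to verify, in order, that the two maps (call them $\Phi$ and $\Psi$) are well-defined, mutually inverse, norm-preserving chain maps; the claim that they induce isometric isomorphisms on homology will then follow formally. A preliminary observation underlies the norm comparison: every morphism of $\pi(M)$ is invertible, and since $L$ is a functor into $\ModZsn$ (whose morphisms are norm-non-increasing), each $L([\gamma]_*)$ is automatically a norm-preserving isomorphism of semi-normed $\Z$-modules.

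Well-definedness of $\Phi$ is immediate by $\Z$-linear extension on the canonical basis of $C_n(M;L)$. For $\Psi$, note that the $\pi_1(M, x_0)$-action on $S_n(\ucov M)$ is free, with $\{\sigma : \sigma(e_0) \in D\}$ as a set of orbit representatives; hence every element of the tensor product has a unique reduced expression supported on such simplices, on which $\Psi$ is defined directly and then extended $\Z$-linearly. Mutual inverseness follows by a short computation on generators, using uniqueness of lifts and the functoriality identity $L([\gamma_{\sigma(e_0)}]_*) \circ L([\gamma_{\sigma(e_0)}]^{-1}_*) = \id$. For isometry, observe that $\Phi$ carries a reduced chain $\sum_i a_i \cdot \sigma_i \in C_n(M;L)$ to $\sum_i L([\gamma_{\sigma_i(e_0)}]^{-1}_*)(a_i) \otimes \widetilde{\sigma_i}$; the canonical lifts $\widetilde{\sigma_i}$ are pairwise distinct because the $\sigma_i$ are, so this is already in reduced form and the preliminary observation yields $|\Phi(c)|_1 = |c|_1$.

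The main obstacle is the chain map property, which requires careful bookkeeping of lifts. On a generator $a \cdot \sigma$, one compares
\[ \Phi\bigl(\partial(a \cdot \sigma)\bigr) \quad\text{and}\quad (\id \otimes \partial)\bigl(\Phi(a \cdot \sigma)\bigr). \]
For $j \geq 1$, the $j$-th face $(\widetilde\sigma)[j]$ has basepoint $\widetilde\sigma(e_0) \in D$ and hence coincides with the canonical lift $\widetilde{\sigma[j]}$, so those terms match immediately. For $j = 0$, the basepoint $\widetilde\sigma(e_1)$ of $(\widetilde\sigma)[0]$ need not lie in $D$; let $g \in \pi_1(M, x_0)$ be the unique deck transformation with $g \cdot \widetilde{\sigma[0]} = (\widetilde\sigma)[0]$. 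The tensor product relation then rewrites the $j = 0$ contribution as $L([g] \circ [\gamma_{\sigma(e_0)}]^{-1}_*)(a) \otimes \widetilde{\sigma[0]}$, and it remains to identify $[g] = [\gamma_{\sigma(e_1)}]^{-1}_* \circ \sigma[0,1] \circ [\gamma_{\sigma(e_0)}]_*$ in $\pi_1(M, x_0)$. This is achieved by lifting the concatenated loop $\gamma_{\sigma(e_0)} \cdot \sigma[0,1] \cdot \overline{\gamma_{\sigma(e_1)}}$ at a chosen $\widetilde x_0 \in D$, after arranging (without loss of generality) that the lifts of all $\gamma_x$ at $\widetilde x_0$ land in $D$; functoriality of $L$ then closes the argument. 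Finally, the isometric chain isomorphism $\Phi$ descends to isometric isomorphisms on homology, and the relative version is obtained by applying the same reasoning to the subcomplex $C_*(U;L_U)$ and passing to the quotient.
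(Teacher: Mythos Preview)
Your proposal is correct and follows exactly the route the paper has in mind: the paper's own proof is simply the one line ``This is a straightforward calculation,'' and you have carried out that calculation in detail, including the key norm-preservation observation (invertibility in~$\pi(M)$ forces each~$L([\gamma]_*)$ to be an isometry) and the bookkeeping of lifts needed for the $0$-th face in the chain-map verification. The only point worth tightening is the ``without loss of generality'' step aligning the lifts of the~$\gamma_x$ with~$D$: since $D$ and~$(\gamma_x)$ are given data, you should either note that this compatibility is implicit in the (unstated) identification of~$\pi_1(M,x_0)$ with the deck transformation group, or carry the extra deck-transformation factors through the computation and observe that they cancel in the tensor product.
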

\begin{proof}
  This is a straightforward calculation.
\end{proof}

\subsection{Relative integral foliated simplicial volume}

Integral foliated simplicial volume is a simplicial volume that
uses dynamical systems of the fundamental groupoid as local coefficients.
For basic notions on (actions) on standard Borel probability spaces
we refer to the literature~\cite{kechris,kechrismiller}.

\begin{defi}[parameter spaces]
  Let $G$ be a groupoid. A \emph{standard $G$-space} is functor~$G
  \longrightarrow \SBP$ into the category of standard Borel
  probability spaces (with probability measure preserving
  transformations). A standard $G$-space is \emph{essentially free [ergodic]},
  if for every point in~$G$ the induced group action is essentially free [ergodic].
  Recall that a group action is \emph{essentially free} almost all points
  have trivial isotropy.
\end{defi}

\begin{defi}[normed local coefficients associated with standard actions]
  Let $G$ be a groupoid and let $\alpha \colon G \longrightarrow \SBP$ be
  a standard $G$-space. We define the associated normed local
  coefficient system~$\linfz \alpha \colon G \longrightarrow \ModZsn$
  by
  \[ L(x) := \linfz {\alpha(x)}
  \]
  (with the $L^1$-``norm'') for all points~$x$ of~$G$
  and
  \begin{align*}
    L(g) \colon \linfz {\alpha(x)} & \longrightarrow \linfz {\alpha(y)}
    \\
    f &\longmapsto f \circ \alpha(g^{-1})
  \end{align*}
  for all morphisms~$g \colon x \longrightarrow y$ of~$G$.
\end{defi}

\begin{defi}[parametrised relative fundamental class]
  Let $n \in \N$,  
  let $M$ be an oriented compact $n$-manifold (possibly with boundary),
  and let $\alpha$ be a standard $\pi(M)$-space.
  Then the image
  \[ \fcl {M,\partial M}^\alpha \in H_n\bigl(M,\partial M; \linfz \alpha\bigr)
  \]
  of the integral fundamental class~$\fcl {M,\partial M}_\Z \in H_n(M,\partial M;\Z)$
  under the change of coefficients map induced by the inclusion of (the constant
  system)~$\Z$ into~$\linfz \alpha$ (as constant functions) is the
  \emph{$\alpha$-parametrised (relative) fundamental class of~$(M,\partial M)$}. 
\end{defi}

\begin{prop}\label{prop:boundaryrel}
  Let $n \in \N$,  
  let $M$ be an oriented compact $n$-manifold (possibly with boundary),
  and let $\alpha$ be a standard $\pi(M)$-space.
  Then
  \[ \partial \bigl(\fcl{M,\partial M}^\alpha\bigr) = \fcl{\partial M}^{\res^{\pi(M)}_{\pi(\partial M)}\alpha}
     \in H_{n-1}(\partial M; \res^{\pi(M)}_{\pi(\partial M)} \alpha).
  \]
  Here, $\res^{\pi(M)}_{\pi(\partial M)} \alpha$ is the composition
  of~$\alpha \colon \pi(M) \longrightarrow \SBP$ with the groupoid
  morphism~$\pi(\partial M) \longrightarrow \pi(M)$ induced by the
  inclusion~$\partial M \longrightarrow M$. 
\end{prop}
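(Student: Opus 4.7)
The plan is to deduce this from the classical integral statement~$\partial [M,\partial M]_{\Z} = [\partial M]_{\Z}$ in~$H_{n-1}(\partial M;\Z)$ by naturality of the connecting homomorphism in the long exact sequence of a pair with respect to change of coefficients. The key point is that the constant inclusion~$\Z \hookrightarrow \linfz{\alpha(x)}$, sending~$1$ to the constant function~$1$, is a morphism of normed local coefficient systems: for any morphism~$g\colon x\to y$ in~$\pi(M)$, the induced map~$\linfz{\alpha(g)}\colon f\mapsto f\circ\alpha(g^{-1})$ sends constant functions to constant functions, so the square involving~$\Z\to\Z$ on one side and~$\linfz{\alpha(x)}\to\linfz{\alpha(y)}$ on the other commutes.

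First I would spell out the constant coefficient system~$\underline\Z\colon \pi(M)\to \ModZsn$, observe that~$\res^{\pi(M)}_{\pi(\partial M)}\underline\Z$ is the constant coefficient system on~$\partial M$, and likewise~$\res^{\pi(M)}_{\pi(\partial M)}\linfz\alpha = \linfz{\res^{\pi(M)}_{\pi(\partial M)}\alpha}$ (both from the definition of restriction along the groupoid morphism induced by the inclusion~$\partial M\hookrightarrow M$). Then the inclusion of constants gives a natural transformation~$\iota\colon \underline\Z\Rightarrow \linfz\alpha$ whose restriction to~$\pi(\partial M)$ is the corresponding constant inclusion into~$\linfz{\res^{\pi(M)}_{\pi(\partial M)}\alpha}$.

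Next, $\iota$ induces a chain map on the singular chain complexes with local coefficients, mapping~$C_*(M,\partial M;\Z)$ to~$C_*(M,\partial M;\linfz\alpha)$ and~$C_*(\partial M;\Z)$ to~$C_*(\partial M;\linfz{\res^{\pi(M)}_{\pi(\partial M)}\alpha})$, and these chain maps commute with the quotient boundary differentials appearing in the snake lemma for the short exact sequence of chain complexes. Passing to homology, the connecting homomorphism~$\partial$ is natural in the coefficients, so the diagram
\begin{equation*}
\begin{array}{ccc}
H_n(M,\partial M;\Z) & \xrightarrow{\;\partial\;} & H_{n-1}(\partial M;\Z) \\
\downarrow\iota_* & & \downarrow\iota_* \\
H_n(M,\partial M;\linfz\alpha) & \xrightarrow{\;\partial\;} & H_{n-1}(\partial M;\linfz{\res^{\pi(M)}_{\pi(\partial M)}\alpha})
\end{array}
\end{equation*}
commutes. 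By definition~$\iota_*[M,\partial M]_\Z = [M,\partial M]^\alpha$ and~$\iota_*[\partial M]_\Z = [\partial M]^{\res^{\pi(M)}_{\pi(\partial M)}\alpha}$, so the result follows from the classical identity~$\partial [M,\partial M]_\Z = [\partial M]_\Z$ on the top row.

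The only subtle step is the compatibility~$\partial \circ \iota_* = \iota_* \circ \partial$ at the chain level, which comes down to unravelling the formula for~$\partial$ on~$C_*(M;L)$: the face maps~$\sigma[j]$ for~$j\geq 1$ only involve~$a$ on the coefficient side, and the~$0$-face contributes the factor~$L(\sigma[0,1])(a)$, which for~$a=1$ a constant function is again constant by the above observation. This makes the square of chain complexes strictly commute, and so all the rest is formal; no normed considerations intervene because we only use the underlying abelian group structure. Hence the main obstacle is purely bookkeeping around the groupoid formulation, and there is no genuine difficulty.
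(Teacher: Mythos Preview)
Your proposal is correct and follows exactly the approach the paper takes: the paper's proof is the single sentence ``We only need to check this equality for integral (relative) fundamental classes, where it is well-known,'' and what you have written is precisely the unpacking of that reduction via naturality of the connecting homomorphism under the change-of-coefficients map~$\Z \hookrightarrow \linfz\alpha$.
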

\begin{proof}
  We only need to check this equality for integral (relative)
  fundamental classes, where it is well-known. 
\end{proof}

\begin{defi}[relative integral foliated simplicial volume]
  Let $n \in \N$,  
  let $M$ be an oriented compact $n$-manifold (possibly with boundary),
  and let $\alpha$ be a standard $\pi(M)$-space. Then the \emph{$\alpha$-parametrised
    simplicial volume of~$(M,\partial M)$} is defined by
  \[ \ifsv {M,\partial M}^\alpha
  := \inf \bigl\{ |c|_1
          \bigm| \text{$c \in C_n(M;\alpha)$ represents~$\fcl{M,\partial M}^\alpha$}
          \bigr\};
  \]
  recall that if $\alpha = \pi(M) \actson (X,\mu)$ and $c =
  \sum_{j=1}^k f_j \cdot \sigma_j \in C_n(M;\alpha)$ is in reduced
  form, then
  \[ |c|_1 = \sum_{j=1}^k \int_X |f_j| \;d\mu.
  \]
  The \emph{relative integral foliated simplicial volume~$\ifsv {M,\partial M}$
    of~$(M,\partial M)$} is the infimum over all parametrised simplicial volumes
  of~$(M,\partial M)$ (the isomorphism types of standard $\pi(M)$-spaces form a
  set). 
\end{defi}

This definition is compatible with the usual definition of
parametrised and integral foliated simplicial volume in terms of
twisted coefficients~\cite{mschmidt,loehpagliantini}. For simplicity,
we only formulate this in the closed case:

\begin{prop}[comparison with the twisted definition]
  Let $n \in \N$, let $M$ be an oriented closed connected $n$-manifold,
  let $x_0 \in M$, and let $\alpha$ be a standard $\pi(M)$-space. Then
  $\ifsv M ^\alpha
  $
  coincides with the $\ell^1$-semi-norm of the parametrised
  fundamental class in homology~$H_n(M; \linfz{\alpha(x_0)})$ with twisted coefficients
  in the $\Z\pi_1(M,x_0)$-module~$\linfz{\alpha(x_0)}$.
\end{prop}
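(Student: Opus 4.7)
The plan is to reduce the statement directly to Proposition~\ref{prop:homlocal}. Since $M$ is a manifold, it is path-connected and admits a universal covering, so Proposition~\ref{prop:homlocal} applies to the normed local coefficient system~$\linfz\alpha \colon \pi(M) \longrightarrow \ModZsn$ on~$M$. It therefore yields mutually inverse chain isomorphisms
\[ \Phi \colon C_*(M;\linfz\alpha) \longrightarrow \linfz{\alpha(x_0)} \otimes_{\Z\pi_1(M,x_0)} C_*(\ucov M;\Z) \]
and~$\Psi$ in the reverse direction, each norm-non-increasing, hence inducing an \emph{isometric} isomorphism on homology.

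First I would check that under~$\Phi$ the parametrised fundamental class $\fcl M^\alpha \in H_n(M;\linfz\alpha)$ corresponds to the parametrised fundamental class defined via twisted coefficients, i.e., to the image of~$\fcl M_\Z \in H_n(M;\Z)$ under the change-of-coefficients map induced by~$\Z \hookrightarrow \linfz{\alpha(x_0)}$ (constant functions). For this, I would use naturality of the explicit formulas for~$\Phi$ and~$\Psi$ in the coefficients: if~$A \longrightarrow B$ is a morphism of normed $\pi_1(M,x_0)$-modules, regarded as a morphism of the associated local systems~$L_A \longrightarrow L_B$, then the induced diagram of chain complexes commutes. Applied to the inclusion~$\Z \hookrightarrow \linfz{\alpha(x_0)}$ of constants, this reduces the matching of fundamental classes to the well-known statement that under the untwisted version of~$\Phi$ the class~$\fcl M_\Z$ corresponds to the usual integral fundamental class of~$M$ expressed via a triangulation of a fundamental domain, which is standard.

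Given this identification, both sides of the claimed equality are the infimum over representatives in homologous classes under isomorphic complexes with matching norms. More concretely, for any cycle~$c \in C_n(M;\linfz\alpha)$ representing~$\fcl M^\alpha$, the cycle~$\Phi(c)$ represents the twisted parametrised fundamental class and satisfies~$|\Phi(c)|_1 \leq |c|_1$; conversely, for any cycle~$c'$ representing the twisted parametrised fundamental class, $\Psi(c')$ represents~$\fcl M^\alpha$ with~$|\Psi(c')|_1 \leq |c'|_1$. Taking infima on both sides gives the equality of semi-norms.

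The main (and essentially only) substantive point is the matching of fundamental classes in the first step; the norm comparison is free once Proposition~\ref{prop:homlocal} is invoked. I do not expect any serious obstacle, since Proposition~\ref{prop:homlocal} does all the heavy lifting and the change-of-coefficients map~$\Z \hookrightarrow \linfz\alpha$ is manifestly compatible with the explicit formulas for~$\Phi$ and~$\Psi$.
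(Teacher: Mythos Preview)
Your proposal is correct and follows exactly the approach of the paper, which simply says ``This is a special case of Proposition~\ref{prop:homlocal}.'' You have merely spelled out in more detail why the fundamental classes match under the isometric isomorphism, which is a reasonable elaboration of the paper's one-line proof.
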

\begin{proof}
  This is a special case of Proposition~\ref{prop:homlocal}.
\end{proof}

Furthermore, the previous proposition also extends to the case
of manifolds with boundary.
So, in principle, one could always get away with the twisted version. However,
working with twisted coefficients requires the choice of a basepoint. When
dealing with manifolds with multiple boundary components, this leads to
an unpleasant overhead. 

\subsection{A local criterion}

Given a top-dimensional parametrised homology class, we will need a
local criterion that decides whether this class coincides with the
parametrised fundamental class or not. As a first step, we briefly
recall parametrised Poincar\'e-Lefschetz duality (which, in particular,
allows to compute the top-dimensional parametrised homology).

\begin{prop}[parametrised Poincar\'e-Lefschetz duality]\label{prop:PD}
  Let $n \in \N$,  
  let $M$ be an oriented compact connected $n$-manifold (possibly with boundary),
  and let $\alpha$ be a standard $\pi(M)$-space. Then the relative cap-product
  induces isomorphisms
  \[ \args \cap [M]_\Z \colon H^{n-k}\bigl(M;\linfz \alpha\bigr)
     \longrightarrow H_k\bigl(M,\partial M; \linfz \alpha\bigr)
  \]
  for all~$k \in \N$.
\end{prop}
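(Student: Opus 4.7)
The plan is to reduce the statement to the classical Poincar\'e--Lefschetz duality theorem for compact oriented manifolds with twisted coefficients in an arbitrary $\Z\pi_1$-module, since the proposition asserts only that the cap product is an isomorphism (no norm comparison is claimed). Fix a basepoint $x_0 \in M$ and set $A := \linfz{\alpha(x_0)}$, regarded as a right $\Z\pi_1(M,x_0)$-module via the restriction of the functor $\linfz\alpha \colon \pi(M) \longrightarrow \ModZsn$ to $\Aut_{\pi(M)} x_0 = \pi_1(M,x_0)$.

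First, I would use Proposition~\ref{prop:homlocal} to identify $H_k(M; \linfz\alpha)$ with the twisted homology $H_k(M;A)$, and the analogous (easier) argument for cohomology—dualising the $\Z\pi_1(M,x_0)$-equivariant chain complex of the universal cover $\widetilde M$—to identify $H^{n-k}(M; \linfz\alpha)$ with the twisted cohomology $H^{n-k}(M;A)$. The relative homology version of Proposition~\ref{prop:homlocal} follows from the absolute version by applying the five lemma to the long exact sequence of the pair $(M,\partial M)$, using that the inclusion $\pi(\partial M) \longrightarrow \pi(M)$ behaves compatibly with the fundamental-domain construction.

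Second, I would check that the relative cap product with $[M]_\Z$ is compatible with these identifications. On the chain level, both the local-coefficient and the twisted cap product are obtained from the Alexander--Whitney diagonal followed by evaluation of a coefficient cochain on a face of the representing simplex; once the family of basepoint paths $(\gamma_x)_{x\in M}$ and the set-theoretic fundamental domain in $\widetilde M$ from Proposition~\ref{prop:homlocal} are fixed, the two cap products coincide after composing with the natural isomorphism. This is a routine but slightly tedious bookkeeping exercise.

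Finally, I would invoke the classical Poincar\'e--Lefschetz duality theorem with twisted coefficients: for any right $\Z\pi_1(M,x_0)$-module $A$, the relative cap product with $[M]_\Z$ induces isomorphisms $H^{n-k}(M;A) \cong H_k(M, \partial M; A)$. This is proved by Mayer--Vietoris induction over a finite handle decomposition of $M$ (the local model being the obvious duality isomorphism on Euclidean and half-Euclidean charts) together with the five lemma, and goes through verbatim for an arbitrary coefficient module such as $A = \linfz{\alpha(x_0)}$. The main obstacle is the first step: ensuring that the identifications from Proposition~\ref{prop:homlocal} intertwine the two flavours of cap product; after that, the statement is a formal consequence of the classical theorem.
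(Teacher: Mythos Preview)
Your proposal is correct and follows essentially the same route as the paper: reduce from local to twisted coefficients (the paper leaves this implicit, you spell it out via Proposition~\ref{prop:homlocal} and cap-product compatibility) and then invoke classical Poincar\'e--Lefschetz duality with twisted coefficients. The only difference is in how the classical step is justified: the paper cites Wall's Poincar\'e-pair formalism for the triangulable case and Bredon's excision-based argument for general topological manifolds, whereas you sketch a Mayer--Vietoris induction over a handle decomposition; the latter is fine in the smooth or PL setting but would need to be replaced by an open-cover argument (as in Bredon) to cover arbitrary topological manifolds, in particular in dimension~$4$.
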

\begin{proof}
  This is a special case of Poincar\'e-Lefschetz duality with twisted
  coefficients: If $M$ is triangulable, the pair~$(M,\partial M)$ is a
  connected Poincar\'e pair in the sense of
  Wall~\cite[Theorem~2.1]{wall} and connected Poincar\'e pairs satisfy
  Poincar\'e-Lefschetz duality with twist\-ed
  coefficients~\cite[Lemma~1.2]{wallpc}.

  For the general case of topological manifolds one can, for example,
  adapt the classical proof~\cite[Chapter~VI]{bredon} to the setting
  of twisted coefficients.
\end{proof}

\begin{prop}[local criterion]\label{prop:localcrit}
  Let $n \in \N$, let $M$ be an oriented compact $n$-manifold
  (possibly with boundary), let $\alpha$ be a standard
  $\pi(M)$-space, let $U \subset M^\circ$ be a non-empty compact connected 
  $n$-dimensional submanifold (possibly with boundary), and 
  let $c \in C_n(M;\linfz \alpha)$ be a relative cycle of~$(M,\partial M)$.
  Then the following are equivalent:
  \begin{enumerate}
  \item The chain~$c$ is an $\alpha$-parametrised relative fundamental
    cycle of~$(M,\partial M)$.
  \item In~$H_n\big(M, M\setminus U^\circ; \linfz \alpha\bigr)$, the chain~$c$
    represents the class $\varphi [U, \partial U]_{\alpha'}$, 
    where
    $\alpha'$ is the restriction of~$\alpha$ to~$\pi(U)$
    and 
    \[ \varphi \colon H_n\bigl(U,\partial U;\linfz {\alpha'}\bigr)
    \longrightarrow H_n\bigl(M,M\setminus U^\circ;\linfz \alpha \bigr) 
    \]
    is induced by the canonical
    transformation~$\linfz {\alpha'} \longrightarrow \linfz \alpha$.
  \end{enumerate}
\end{prop}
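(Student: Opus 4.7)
Set $L := \linfz \alpha$ and $L' := \linfz{\alpha'}$, so that $L'$ is the restriction of $L$ along~$\pi(U) \to \pi(M)$. The plan is to derive $(1)\Rightarrow(2)$ from the classical local criterion with integer coefficients via naturality of coefficient change, and to establish the converse by showing that the restriction map
\[ j_* \colon H_n(M,\partial M; L) \longrightarrow H_n\bigl(M, M\setminus U^\circ; L\bigr) \]
is injective via parametrised Poincar\'e--Lefschetz duality.

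For $(1)\Rightarrow(2)$, let $\iota \colon \Z \to L$ be the natural transformation given by inclusion of constant functions. By the definition of the parametrised relative fundamental classes, $\iota_* [M,\partial M]_\Z = \fcl{M,\partial M}^\alpha$ and $\iota_*[U,\partial U]_\Z = \fcl{U,\partial U}^{\alpha'}$. The classical local criterion with $\Z$-coefficients says that $j_*\fcl{M,\partial M}_\Z$ equals the image of $\fcl{U,\partial U}_\Z$ under the excision isomorphism $H_n(U,\partial U;\Z) \to H_n(M, M\setminus U^\circ; \Z)$. Since $\varphi$ is precisely the local-coefficient analogue of excision together with the natural transformation $L' \to L$, naturality yields $j_*\fcl{M,\partial M}^\alpha = \varphi \fcl{U,\partial U}^{\alpha'}$; hence $[c] = \fcl{M,\partial M}^\alpha$ immediately implies~(2).

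For $(2)\Rightarrow(1)$, one may assume $M$ connected (otherwise one restricts to the component containing $U$; connectedness is needed only here). I would apply parametrised Poincar\'e--Lefschetz duality (Proposition~\ref{prop:PD}) to obtain $H_n(M,\partial M;L) \cong H^0(M;L)$ via cap product with $[M]_\Z$, and, after composing with excision via $\varphi$, $H_n(M, M\setminus U^\circ; L) \cong H_n(U,\partial U;L') \cong H^0(U;L')$. Naturality of the cap product under the inclusion $U \hookrightarrow M$ (using that the integral fundamental classes $[M]_\Z$ and $[U,\partial U]_\Z$ correspond under this excision) converts $\varphi^{-1}\circ j_*$ into the restriction of $0$-cocycles $H^0(M;L) \to H^0(U;L')$. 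Fixing $x_0\in U$ and applying Proposition~\ref{prop:homlocal}, these $H^0$ groups identify with the $\pi_1(M,x_0)$- and $\pi_1(U,x_0)$-invariant submodules of~$L(x_0)$, and the restriction is the inclusion of the smaller invariant submodule into the larger one; in particular it is injective. Combined with the identity $j_*[c] = \varphi\fcl{U,\partial U}^{\alpha'} = j_*\fcl{M,\partial M}^\alpha$ from~(2) and the previous paragraph, this forces $[c] = \fcl{M,\partial M}^\alpha$.

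The main technical obstacle is in the $(2)\Rightarrow(1)$ step: one must verify the availability of excision with the semi-normed local coefficient~$L$ (so that $\varphi$ is an isomorphism), and the naturality of the Poincar\'e--Lefschetz isomorphism with respect to the inclusion of pairs $(U,\partial U) \hookrightarrow (M, M\setminus U^\circ)$, so that $\varphi^{-1}\circ j_*$ really corresponds to the restriction of $0$-cocycles. Both points reduce, via Proposition~\ref{prop:homlocal} and a lift to the universal cover, to standard facts, but the identifications have to be tracked with some care in the semi-normed, local-coefficient setting.
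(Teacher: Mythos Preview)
Your argument is correct, and both directions are handled soundly. The $(1)\Rightarrow(2)$ step is essentially the same as in the paper: naturality of the change-of-coefficients map together with the integral local criterion. Your $(2)\Rightarrow(1)$ step, however, takes a genuinely different route. The paper introduces an auxiliary embedded $n$-ball~$V \subset U^\circ$, passes to the \emph{coinvariant} coefficient system~$\linfz{\alpha}_{\pi(M)}$ (which is constant, hence gives ordinary homology), tracks $c$ into $H_n(M,M\setminus V^\circ;\linfz{\alpha}_{\pi(M)})$, and then invokes a previously established local criterion for embedded balls (itself a consequence of parametrised Poincar\'e--Lefschetz duality). You instead stay with the original local coefficients and the submanifold~$U$, and argue directly that $j_*$ is injective by identifying it, via Poincar\'e--Lefschetz duality for both~$M$ and~$U$ and excision, with the inclusion of invariants~$L(x_0)^{\pi_1(M,x_0)} \hookrightarrow L(x_0)^{\pi_1(U,x_0)}$. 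This is more self-contained (no auxiliary ball, no citation to an external criterion) and conceptually cleaner; the paper's detour through coinvariants and a ball has the mild advantage of reducing to constant coefficients, where the required naturality of duality is more familiar. Your caveat about verifying excision and cap-product naturality in the local-coefficient setting is well placed, but these are routine. One small point: your parenthetical ``otherwise one restricts to the component containing~$U$'' does not quite repair the disconnected case, since condition~(2) then says nothing about the other components; but the paper's proof shares this issue, and all applications in the paper are to connected~$M$.
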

\begin{proof}
  The inclusions~$(M,\partial M) \longrightarrow (M, M\setminus U^\circ)$, 
  $(U,\partial U) \longrightarrow (M,M\setminus U^\circ)$ give rise to the left
  hand side of the commutative diagram in Figure~\ref{fig:localcrit}.

  \begin{figure}
    \begin{center}
      \makebox[0pt]{%
    $ \xymatrix{%
      H_n(U,\partial U;\Z)
      \ar[r]
      \ar[d]_-{\cong}
      &
      H_n(U,\partial U;\linfz{\alpha'})
      \ar[d]^-{\varphi}
      \ar[r]
      &
      H_n(U,U\setminus V^\circ; \linfz{\alpha'}_{\pi(U)})
      \ar[d]
      \\
      H_n(M,M\setminus U^\circ;\Z)
      \ar[r]
      &
      H_n(M, M\setminus U^\circ;\linfz \alpha)
      \ar[r]
      &
      H_n(M, M\setminus V^\circ; \linfz{\alpha}_{\pi(M)})
      \\
      H_n(M,\partial M;\Z)
      \ar[r]
      \ar[u]^-{\cong}
      &
      H_n(M,\partial M;\linfz \alpha)
      \ar[u]
      \ar[r]
      &
      H_n(M, M\setminus V^\circ; \linfz{\alpha}_{\pi(M)})
      \ar@{=}[u]
      }
        $}
      \end{center}

    \caption{Proving the local criterion for parametrised fundamental classes}
    \label{fig:localcrit}
  \end{figure}

  For the right hand side, we proceed as follows: Let $V \subset U^\circ \subset M$
  be an embedded closed $n$-ball. The local coefficient systems $\linfz{\alpha}_{\pi(M)}$
  etc.\ are the coinvariants of the original systems~$\linfz \alpha$, i.e., they
  are obtained by dividing out the action of the automorphisms (that is the fundamental groups)
  at each point. Hence, by construction, these local coefficient systems can be
  viewed as constant coefficients and thus lead to ordinary homology groups.

  If $c$ satisfies the first condition, then $c$ represents~$\varphi [U,\partial U]_{\alpha'}$ 
  in the relative group~$H_n(M,M\setminus U^\circ;\linfz \alpha)$ because the isomorphisms of the
  leftmost column in Figure~\ref{fig:localcrit} are compatible with the
  corresponding integral fundamental classes.

  Conversely, let $c$ represent $\varphi [U,\partial U]_{\alpha'}$
  in~$H_n(M,M\setminus U^\circ;\linfz \alpha)$. Then $c$ represents
  the image of the ordinary fundamental class~$[U,U\setminus
    V^\circ]_\Z$, and thus of~$[M,M\setminus V^\circ]_\Z$, in~$H_n(M,M
  \setminus V^\circ;\linfz \alpha_{\pi(M)})$. Therefore, $c$ satisfies
  the hypothesis of the local criterion for embedded
  balls~\cite[Proposition~3.9]{fauser}; it should be noted that the
  version of the cited local criterion can also for local coefficients
  be derived from parametrised Poin\-ca\-r\'e-Lef\-schetz duality
  (Proposition~\ref{prop:PD}) in the same way as for twisted coefficients.
  Applying this local criterion to the
  ball~$V \subset M$ then implies that $c$ is an $\alpha$-parametrised
  fundamental cycle of~$(M,\partial M)$.
\end{proof}

\section{Simplicial volumes of graph manifolds}\label{sec:proofs}

We will first prove Theorem~\ref{thm:mainergodic} and then we will
derive Theorem~\ref{thm:main}. In order to prove Theorem~\ref{thm:mainergodic}, 
one can either perform all glueings at once or only glue along one torus at
a time (and then proceed by induction). 
We prefer the latter version. When combining chains along tori, we need
a way to fill boundaries efficiently: 

\begin{prop}[parametrised UBC for tori]\label{prop:UBClocal}
  Let $n \in \N_{>0}$, let $G := \pi(T^n)$, and let $\alpha$ be an
  essentially free standard $G$-space. Then $C_*(T^n;\alpha)$ satisfies
  the uniform bounday condition (UBC) in every degree, i.e.:
  For every~$k \in \N$ there is a constant~$K \in \R_{>0}$ such that 
  for every null-homologous cycle~$c \in C_k(T^n;\alpha)$ there
  exists a chain~$b \in C_{k+1}(T^n;\alpha)$ with
  \[ \partial b = c
     \quad\text{and}\quad
     |b|_1 \leq K \cdot |c|_1.
  \]
\end{prop}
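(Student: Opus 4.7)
The plan is to invoke Matsumoto--Morita's classical UBC theorem for spaces with amenable fundamental group (here $\pi_1(T^n) \cong \Z^n$ is amenable) and to promote it to the parametrised setting via a measurable integer rounding argument that crucially exploits the essential freeness of~$\alpha$.

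First, I would use Proposition~\ref{prop:homlocal} to identify the normed chain complex $C_*(T^n;\linfz{\alpha})$ isometrically with $\linfz{X} \otimes_{\Z[\Z^n]} C_*(\ucov{T^n};\Z)$, where $\ucov{T^n} \cong \R^n$ is the universal cover and $X$ carries the essentially free probability measure preserving $\Z^n$-action underlying~$\alpha$. Under this identification the $\ell^1$-norm becomes the natural $L^1$-simplicial norm $\sum_j \int_X |f_j|\,d\mu$ on a reduced representative $\sum_j f_j \cdot \sigma_j$.

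Next, I would apply Matsumoto--Morita to $C_*(T^n;\R)$: since $\Z^n$ is amenable, there exist constants $K_k \in \R_{>0}$ such that every null-homologous real cycle in degree~$k$ admits a real filling of $\ell^1$-norm at most $K_k$ times its own. Their proof averages a naive contracting chain homotopy of $C_*(\R^n;\R)$ along a F{\o}lner sequence in~$\Z^n$. Tensoring this bounded chain contraction with $L^1(X;\R)$ yields, for any null-homologous $c \in C_k(T^n;\linfz{\alpha})$ viewed as an $\R$-coefficient chain, a real filling $b_\R$ with $|b_\R|_1 \leq K_k \cdot |c|_1$.

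The remaining task, which I expect to be the main obstacle, is to promote $b_\R$ to an integer filling $b \in C_{k+1}(T^n;\linfz{\alpha})$ of comparable norm. Here essential freeness enters decisively: the orbit equivalence relation of $\Z^n \actson X$ is hyperfinite, so along a F{\o}lner exhaustion of~$\Z^n$ one can choose a measurable fundamental domain for the relation and round the coefficients of~$b_\R$ to integers coherently on this domain. The resulting integer chain will bound~$c$ only up to a residue of arbitrarily small $L^1$-norm, so I would iterate the rounding and sum a rapidly converging geometric series of correction terms to obtain an integer filling $b$ with $|b|_1 \leq (K_k + 1) \cdot |c|_1$. The rest is bookkeeping.
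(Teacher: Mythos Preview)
The paper's own proof is a one-line citation: it invokes Proposition~\ref{prop:homlocal} to pass from local to twisted coefficients (exactly your first step) and then quotes the parametrised UBC for tori from Fauser--L\"oh~\cite[Theorem~1.3]{fauserloeh}. So the substance of your proposal is really an attempt to reprove that cited theorem.

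Your reduction to twisted coefficients and the appeal to Matsumoto--Morita for the \emph{real} UBC are both fine. The gap is in the rounding step. You assert that rounding the real filling~$b_\R$ to an integer chain~$b$ leaves a residue~$\partial b - c$ of ``arbitrarily small $L^1$-norm'', so that a geometric iteration converges. This is not true as stated. Consider~$c = \chi_A \cdot \tau$ for a single simplex~$\tau$ and a set~$A$ of small measure~$\varepsilon$. The Matsumoto--Morita filling averages a contracting homotopy over a F{\o}lner set~$F$, so the coefficients of~$b_\R$ are of the form~$\frac{r}{|F|}\,(g\cdot\chi_A)$ with~$r\in\Z$ bounded; for~$|F|$ large these have supremum norm below~$1/2$, and pointwise rounding sends them all to zero. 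Then~$b=0$ and the residue is~$c$ itself, so the iteration makes no progress. More generally the rounding error is controlled by the \emph{number of simplices} in~$b_\R$, not by~$|c|_1$, and there is no uniform bound relating the two.

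The ingredients you list (amenability, essential freeness, hyperfiniteness, F{\o}lner sets) are the right ones, but they should enter earlier, not as a post-hoc rounding. The approach in~\cite{fauserloeh} is to replace the real scalar~$1/|F|$ in the F{\o}lner average directly by characteristic functions~$\chi_{X_g}$ coming from an Ornstein--Weiss/Rokhlin tiling of~$X$ by~$F$-translates of a base set; this produces an integer-valued, $L^1$-small ``averaging'' of the chain contraction without ever leaving~$\linfz{X}$. Essential freeness is exactly what makes such tilings available. If you rewrite your argument along those lines you will recover the cited result; the detour through real coefficients followed by rounding does not close.
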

\begin{proof}
  By the correspondence between local and twisted coefficients on
  the chain level (Proposition~\ref{prop:homlocal}), this is a direct
  consequence of the parametrised uniform boundary condition for tori
  formulated in terms of twisted
  coefficients~\cite[Theorem~1.3]{fauserloeh}.
\end{proof}

\subsection{Vertex manifolds}

We first treat the base case of $S^1$-bundles; in a second step, we then
use a glueing argument and UBC to treat the case of general tame $S^1$-structures. 

\begin{prop}\label{prop:bundlecase}
  Let $M$ be an oriented compact connected smooth $n$-manifold that is the total space
  of a smooth $S^1$-bundle~$p\colon M\longrightarrow B$ over a compact smooth
  $(n-1)$-manifold~$(B,\partial B)$. 
  Then
  \[ \ifsv {M,\partial M}^\alpha = 0
  \]
  holds for all standard $\pi(M)$-spaces~$\alpha$ whose restrictions to all 
  fibres are essentially free.
\end{prop}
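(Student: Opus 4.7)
The strategy is to import Yano's vanishing argument for $\sv\cdot$ under fixed-point-free $S^1$-actions into the parametrised setting: the essentially free action of the fibre class $t$ on the parameter space plays the role of the geometric $S^1$-action, and a Rokhlin-tower partition of the parameter space is used to implement Yano's ``division by $N$'' combinatorially in integer-valued coefficients. Fix $\epsilon>0$ and choose a large integer $N$; the plan is to exhibit an $\alpha$-parametrised relative fundamental cycle of $\ell^1$-norm at most $\epsilon$.

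First, triangulate $(B,\partial B)$ so that every top-dimensional simplex $\Delta$ lies in a chart over which $p$ trivialises; fix trivialisations $p^{-1}(\Delta)\cong\Delta\times S^1$ and, using a standard prism triangulation of $\Delta\times S^1$, assemble an integral relative fundamental cycle $z\in C_n(M;\Z)$ of $\ell^1$-norm~$K$. Fix a basepoint $x_0\in M$ in a chart and let $t\in\pi_1(M,x_0)$ be the class of the fibre loop. By hypothesis, $t$ acts essentially freely on $X:=\alpha(x_0)$, so Rokhlin's lemma provides a Borel set $A\subset X$ with $A,tA,\ldots,t^{N-1}A$ essentially disjoint and $\mu(X\setminus\bigsqcup_i t^iA)<\epsilon/K$.

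Now execute the parametrised Yano construction. After reducing to the principal $S^1$-bundle underlying $p$ (which is possible because $M$ is oriented), the fibrewise $N$-th power map $\phi_N\colon M\to M$ is a degree-$N$ self-map sending $t\mapsto t^N$ on~$\pi_1$; its pushforward sends an integral fundamental cycle to a chain representing $N\cdot\fcl{M,\partial M}^\alpha$. Build a parametrised chain $c\in C_n(M;\linfz\alpha)$ by decorating each top-simplex $\sigma$ of $\phi_{N,\#}(z)$ by an indicator $\chi_{t^{i(\sigma)}A}$, where the combinatorial index $i(\sigma)\in\{0,\ldots,N-1\}$ records how far into the fibre $\sigma$ sits relative to a chosen section of the trivialisation. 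The local-coefficient boundary twist $\chi_B\mapsto\chi_{tB}$ applied along fibre-direction faces matches the Rokhlin shift $i\mapsto i+1$, so internal faces cancel; the only residual boundary contributions are a chain on $\partial M$ (consisting of small-norm chains on the torus boundary components) and a term supported on the bad set $X\setminus\bigsqcup_i t^iA$. The $\ell^1$-norm of $c$ is bounded by $K\cdot\mu(A)\leq K/N$.

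Finally, fill the residual boundary on $\partial M$ via parametrised UBC for tori (Proposition~\ref{prop:UBClocal}), at a norm cost of order $1/N$, and absorb the bad-set residue by adding $\chi_{X\setminus\bigsqcup_i t^iA}\cdot z$ of norm at most $\epsilon$. Apply the local criterion (Proposition~\ref{prop:localcrit}) to a small embedded ball $V\subset M^\circ$, where the local-coefficient system is essentially constant, to confirm that the corrected chain represents $\fcl{M,\partial M}^\alpha$; on $V$ the verification reduces to the classical integral case. Choosing $N$ large gives a parametrised fundamental cycle of $\ell^1$-norm at most $\epsilon+O(1/N)$, hence $\ifsv{M,\partial M}^\alpha=0$. \emph{Main obstacle.} The crux is making Yano's ``division by $N$'' rigorous in integer-valued local coefficients via the Rokhlin tower: the combinatorial matching between the fibre-direction $t$-twist in the local-coefficient boundary formula and the cyclic shift of Rokhlin levels is what drives the cancellation of internal faces. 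Ensuring that $\phi_N$ is available (requiring a principal-bundle reduction), keeping the index bookkeeping consistent across interfaces between prisms, and filling the residual boundary on $\partial M$ without inflating the norm are the main technical hurdles; these are handled respectively by the orientability hypothesis, a careful choice of reference paths, Proposition~\ref{prop:UBClocal}, and Proposition~\ref{prop:localcrit}.
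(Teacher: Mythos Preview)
Your core analytical ingredient (Rokhlin towers for the essentially free $\Z$-action of the fibre class) is exactly right, and indeed this is precisely how Schmidt proves the $S^1$ case that the paper cites. The difference lies in the geometric assembly, and here your approach has a genuine gap.

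You invoke a global fibrewise $N$-th power map $\phi_N\colon M\to M$ of degree~$N$, obtained after ``reducing to the principal $S^1$-bundle underlying~$p$''. This step fails in two ways. First, orientability of~$M$ does not imply orientability of the bundle: the paper explicitly allows the base~$B$ to be non-orientable (Definition~\ref{def:tame}), and then the bundle cannot be principal. Second, and more seriously, even for a genuine principal $S^1$-bundle a fibrewise degree-$N$ self-map covering~$\id_B$ need not exist: such a map would force the Euler class to satisfy $(N-1)\,e=0$, which is impossible e.g.\ for the Hopf bundle restricted over a suitable base. So for nontrivial bundles there is simply no global~$\phi_N$ to push~$z$ through.

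The paper circumvents this by never attempting a global construction on~$M$. Instead it follows Yano's sequence of hollowings $M_{n-1}\to\cdots\to M_0=M$ along preimages of the skeleta of a fine triangulation of~$B$, so that $M_{n-1}\cong\coprod_\Delta \Delta_{n-1}\times S^1$ is a disjoint union of \emph{trivial} bundles. On each trivial piece one takes the cross product of an integral cycle on~$\Delta_{n-1}$ with a small parametrised fundamental cycle of~$S^1$ (this is where Schmidt's Rokhlin argument enters, applied only to a single~$S^1$), and then pushes forward through the hollowings, using the parametrised UBC for tori (Proposition~\ref{prop:UBClocal}) to fill the defects that arise at each stage. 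In effect, the hollowing machinery is exactly the bookkeeping device that replaces your global~$\phi_N$; if you localise your Rokhlin construction to the trivialised prisms and then reassemble, you are led back to the paper's argument.
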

\begin{proof}
	We choose a triangulation of~$B$ that is fine enough such that the bundle~$p$ is trivial
	over every simplex in this triangulation. As in Yano's proof for vanishing of simplicial volume
	of oriented closed connected smooth manifolds with non-trivial smooth $S^1$-action~\cite{yano}, 
	we define a sequence
	\[ M_{n-1} \stackrel{p_{n-2}} \longrightarrow M_{n-2} \longrightarrow \dots
			\longrightarrow M_1 \stackrel{p_0} \longrightarrow M_0 := M
	\]
	of hollowings of~$M$. Let~$X_0 := p^{-1}(B^{(0)})$ be the pre-image of the
	$0$-skeleton of~$B$. We define~$p_0$ to be the hollowing 
	at~$X_0\subset M_0$~\cite[Section~2]{yano}, i.e., we obtain $M_1$ from~$M_0$ by 
	removing a (small) tubular neighbourhood of~$X_0$.
	Now, we inductively define for all~$j\in \{1,\dots, n-1\}$ the map~$p_j$ to be the
	hollowing at~$X_j\subset M_j$, where~$X_j$ is the pullback of the $j$-skeleton of~$B$ along
	$p\circ p_0 \circ p_1 \circ \dots \circ p_{j-1}$.
	Let~$B^{[n-1]}$ denote the set of $(n-1)$-simplices in the
	chosen triangulation of~$B$.
	Furthermore, for every~$\Delta \in B^{[n-1]}$	let
	\[ \Delta_{n-1} \longrightarrow \Delta_{n-2} \longrightarrow \dots
				\longrightarrow \Delta_1 \longrightarrow \Delta_0 = \Delta
	\]
	be the induced sequence of restricted hollowings at the skeleta of~$\Delta$. 
	Then
	\[ M_{n-1} \cong \coprod_{\Delta\in B^{[n-1]}} \Delta_{n-1} \times S^1
	\]
        and $M_{n-1}$ inherits an orientation from~$M$. 
	
	Let~$\alpha$ be a standard $\pi(M)$-space whose restrictions to the fibres are
        essentially free. We set~$\alpha_0 := \alpha$. 
	For every~$j\in \{1,\dots, n-1\}$ let~$\alpha_j$ be the standard $\pi(M_j)$-space that
	is given by restricting~$\alpha$ along~$p_0 \circ p_1\circ \dots \circ p_{j-1}$. Furthermore,
	we denote by~$P_j$ the induced map of~$p_j$ from the $\alpha_j$-parametrised chain complex
	of~$M_j$ to the $\alpha_{j-1}$-parametrised chain complex of~$M_{j-1}$.
	
	For every simplex~$\Delta \in B^{[n-1]}$, we choose an integral relative fundamental
	cycle~$z_\Delta\in C_{n-1}(\Delta_{n-1};\Z)$ that is compatible with the CW-structure
	on~$\partial \Delta$ given by the	sequence of hollowings above.
	By hypothesis, the restriction of~$\alpha_{n-1}$
	to each~$\Delta \times S^1$ yields an essentially free standard
        $\pi(S^1)$-space~$\alpha_\Delta$.
	Let~$\epsilon~\in \R_{>0}$. Then, for every~$\Delta\in B^{[n-1]}$ there exists an
	$\alpha_\Delta$-parametrised fundamental cycle~$c^{S^1}_\Delta$ of~$S^1$
        with
	\[ |c_\Delta^{S^1}|_1 < \epsilon
	\]
	such that~$z_\Delta \times c^{S^1}_\Delta$ is a
        $\alpha_\Delta$-parametrised relative fundamental cycle
        of~$\Delta \times S^1$ (with the orientation inherited
        from~$M$); Schmidt~\cite[Proposition~5.30]{mschmidt} stated this for
        ergodic parameter spaces, but his proof also works for essentially free
        parameter spaces.
        We set
	\[ z := \sum_{\Delta \in B^{[n-1]}} z_\Delta \times c^{S^1}_\Delta \in C_n(M_{n-1};\alpha_{n-1}).
	\]
	Let~$A:= \max \bigl\{ |z_\Delta|_1 \bigm| \Delta\in B^{[n-1]}\bigr\}$.
	Then we have
	\[ |z|_1 \leq n \cdot |B^{[n-1]}| \cdot A \cdot \epsilon.
	\]
        Therefore, $z$ is a parametrised relative fundamental cycle of~$M_{n-1}$ with
        small norm. Starting with~$z$, we wish to construct a parametrised
        relative fundamental cycle of~$M$ of small norm. 
	
	Indeed, we are now in the same situation as in the proof of the
        analogous vanishing result for parametrised simplicial volumes
        of smooth manifolds with non-trivial smooth
        $S^1$-actions~\cite[Theorem~1.1, Remark~6.4]{fauser}, the only
        difference being that we had to do one more step in the
        sequence of hollowings to obtain a trivial $S^1$-bundle. We
        then proceed as in the cited proof, adapting the chain
	\[ P_0\circ P_1\circ \dots \circ P_{n-2} (z) \in C_n(M;\alpha)
	\]
	to get an $\alpha$-parametrised relative fundamental cycle
        of~$M$ without increasing the norm too much: We
        inductively investigate the defect of the pushforward of~$z$
        in~$M_j$ from being a parametrised relative fundamental cycle
        and we fill the defect with the help of
        Proposition~\ref{prop:UBClocal}.
\end{proof}

\begin{prop}[vertex manifolds]\label{prop:simvoltame}
  Let $M$ be an oriented connected compact manifold that admits a tame $S^1$-structure.
  Then
  \[ \ifsv {M,\partial M}^\alpha = 0
  \]
  holds for all essentially free standard $\pi(M)$-spaces~$\alpha$. 
\end{prop}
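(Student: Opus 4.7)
The plan is to exploit the tame $S^1$-structure on $M$ by decomposing $M$ into the $S^1$-bundle part $M' := M \setminus \bigcup_{j=1}^m N_j^\circ$ (which satisfies the hypotheses of Proposition~\ref{prop:bundlecase}) and the pieces $N_1, \dots, N_m$, each homotopy equivalent to a torus of dimension at most $n-2$. Given $\epsilon > 0$, the aim is to produce an $\alpha$-parametrised relative fundamental cycle $c \in C_n(M; \linfz \alpha)$ of $(M, \partial M)$ with $|c|_1 \leq C \cdot \epsilon$ for some constant $C$ depending only on $M$; since $\epsilon$ is arbitrary, this yields $\ifsv{M,\partial M}^\alpha = 0$.

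First, I would apply Proposition~\ref{prop:bundlecase} to the smooth $S^1$-bundle $M' \to B$. Because the fibres are $\pi_1$-injective in $M$, the restriction of $\alpha$ to any fibre is still essentially free, so there is a parametrised relative fundamental cycle $c' \in C_n\bigl(M'; \alpha|_{\pi(M')}\bigr)$ of $(M', \partial M')$ with $|c'|_1 < \epsilon$. Viewing $c'$ as a chain in $C_n(M; \linfz \alpha)$ via inclusion, Proposition~\ref{prop:boundaryrel} tells us that $\partial c'$ restricts on each $\partial N_j$ to an $\alpha$-parametrised fundamental cycle of $\partial N_j$, with the orientation inherited from $M'$ (which is opposite to the one inherited from $N_j$).

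Next, I would fill this boundary discrepancy inside each $N_j$. Because $N_j$ is $\pi_1$-injective in $M$ and homotopy equivalent to a torus $T^{k_j}$ with $k_j \leq n-2$, the system $\alpha|_{\pi(N_j)}$ remains essentially free, and the image of the parametrised fundamental class $\fcl{\partial N_j}^{\alpha|_{\pi(\partial N_j)}}$ in $H_{n-1}\bigl(N_j; \alpha|_{\pi(N_j)}\bigr)$ vanishes: by Proposition~\ref{prop:boundaryrel} it equals the connecting boundary of $\fcl{N_j,\partial N_j}^{\alpha|_{\pi(N_j)}}$ and then dies by exactness of the long exact sequence of the pair $(N_j, \partial N_j)$. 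Thus there exists $d_j \in C_n\bigl(N_j; \alpha|_{\pi(N_j)}\bigr)$ with $\partial d_j = -\bigl(\partial c'\bigr)|_{\partial N_j}$. Setting
\[ c := c' + \sum_{j=1}^m d_j \in C_n\bigl(M; \linfz \alpha\bigr)
\]
then produces a relative cycle whose boundary lies in $C_{n-1}(\partial M; \linfz \alpha)$. Applied at a small ball $U \subset (M')^\circ$, the local criterion (Proposition~\ref{prop:localcrit}) confirms that $c$ represents $\fcl{M,\partial M}^\alpha$, since on such a $U$ the chains $c$ and $c'$ agree and $c'$ was constructed to carry the parametrised fundamental class of $(M',\partial M')$.

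The main obstacle is controlling the $\ell^1$-norms of the fills $d_j$: one needs a bound of the form $|d_j|_1 \leq K_j \cdot \bigl|(\partial c')|_{\partial N_j}\bigr|_1 \leq (n+1) \cdot K_j \cdot \epsilon$ with constants $K_j$ independent of $\epsilon$. This amounts to a parametrised uniform boundary condition for $C_*\bigl(N_j; \alpha|_{\pi(N_j)}\bigr)$ in degree $n-1$, analogous to Proposition~\ref{prop:UBClocal} but for $N_j$ in place of $T^{k_j}$. I would deduce it from Proposition~\ref{prop:UBClocal} by transporting cycles along a chain homotopy equivalence induced by $N_j \simeq T^{k_j}$ with controlled $\ell^1$-distortion, filling inside $T^{k_j}$ via Proposition~\ref{prop:UBClocal}, and pushing back; essential freeness of the pulled-back coefficient system on $T^{k_j}$ is guaranteed by the $\pi_1$-injectivity of $N_j$ in $M$. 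Granted such $K_j$, we obtain $|c|_1 \leq \bigl(1 + (n+1) \sum_{j=1}^m K_j\bigr) \cdot \epsilon$, and letting $\epsilon \to 0$ completes the proof.
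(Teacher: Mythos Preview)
Your proposal is correct and follows essentially the same route as the paper: decompose into the $S^1$-bundle piece~$M'$ and the torus-like pieces~$N_j$, use Proposition~\ref{prop:bundlecase} on~$M'$, fill the resulting boundary defects inside each~$N_j$ via the parametrised UBC transported along the homotopy equivalence~$N_j \simeq T^{k_j}$ (what the paper cites as ``homotopy invariance of UBC''), and verify the result with the local criterion. The only cosmetic difference is that the paper observes null-homologousness of~$(\partial c')|_{N_j}$ directly from~$H_{n-1}(N_j;\alpha_j)=0$ for dimension reasons rather than via the long exact sequence, and applies the local criterion with~$U=M'$ rather than a small ball.
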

\begin{proof}
  Let $n := \dim M$. 
  Because $M$ admits a tame $S^1$-structure, there exists an~$m \in \N$ and
  pairwise disjoint compact submanifolds~$N_1, \dots, N_m$
  (with boundary) of dimension~$n$ with the following properties (Definition~\ref{def:tame}):
  \begin{itemize}
  \item The complement~$M' := M \setminus \bigcup_{j=1}^m N_j^\circ$
    admits a smooth $S^1$-bundle structure. 
  \item For each~$j \in \{1,\dots,m\}$ the manifold~$N_j$ is homotopy equivalent to
    a torus of dimension at most~$n-2$ and $N_j$ is a $\pi_1$-injective subspace of~$M$. 
  \end{itemize}
  Let $\alpha$ be an essentially free standard $\pi(M)$-space and let $\alpha'$ be
  the induced standard $\pi(M')$-space. Because the fibres are $\pi_1$-injective,
  the restriction of~$\alpha'$ to each fibre is essentially free. Therefore, $M'$
  and $\alpha'$ satisfy the hypotheses of Proposition~\ref{prop:bundlecase}.
  
  Let $\varepsilon \in \R_{>0}$. By Proposition~\ref{prop:bundlecase}, there
  exists a chain~$z' \in C_n(M';\alpha')$ representing~$\fcl{M',\partial M'}_{\alpha'}$
  with
  \[ |z'|_1 \leq \varepsilon.
  \]
  
  For~$j \in \{1,\dots,m\}$ let $z_j := (\partial z')|_{N_j} \in
  C_{n-1}(N_j;\alpha_j)$ be the $N_j$-component of~$\partial
  z'$. Here, $\alpha_j$ denotes the restriction of~$\alpha$
  to~$N_j \subset M' \subset M$; because $\alpha$ is
  essentially free and $N_j$ is a $\pi_1$-injective subspace of~$M$,
  also $\alpha_j$ is an essentially free standard $\pi(N_j)$-space. 
  As $N_j$ is homotopy equivalent to a torus of dimension at most~$n-2$,
  the chain~$z_j$ is null-homologous in~$C_*(N_j;\alpha_j)$ for dimension
  reasons. Hence, we can apply Proposition~\ref{prop:UBClocal} (and
  homotopy invariance of UBC~\cite[Proposition~3.15]{fauser}) to obtain
  a chain~$b_j \in C_n(N_j;\alpha_j)$ with
  \[ \partial b_j = z_j
     \quad\text{and}\quad
     |b_j|_1 \leq K_j \cdot |z_j|_1 
  \]
  (where $K_j$ is a UBC-constant for~$C_{n-1}(N_j; \alpha_j)$). 
  We now set  
  \[ z := z' - \sum_{j=1}^m b_j \in C_n(M;\alpha)
  \]
  (using the obvious inclusions between the parametrised
  chain complexes). By construction, we have 
  \[ |z|_1 \leq \varepsilon + \sum_{j=1}^m K_j \cdot (n+1) \cdot \varepsilon
     \quad\text{and}\quad
     \partial z = \partial z'.
  \]
  The local criterion (Proposition~\ref{prop:localcrit}) shows that $z$
  represents~$\fcl {M,\partial M}_\alpha$ (because $z$ restricts
  to the relative fundamental cycle~$z'$ of~$(M',\partial M')$).
  Therefore,
  \[ \ifsv {M,\partial M} ^\alpha \leq |z|_1 \leq \varepsilon + \sum_{j=1}^m K_j \cdot (n+1) \cdot \varepsilon.
  \]
  Taking $\varepsilon \rightarrow 0$ proves~$\ifsv {M,\partial M}^\alpha =0$.
\end{proof}

For non-spherical Seifert $3$-manifolds it was already known that the
stable integral simplicial volume
is~$0$~\cite{loehpagliantini}. However, for the induction step we will
need a more general vanishing result than just for stable integral
simplicial volume. Therefore, already for the treatment of graph manifolds
in dimension~$3$ the dynamical version of simplicial volume is helpful.

\subsection{Edge glueings}

\begin{prop}[glueings along tori]\label{prop:glueing}
  Let $n \in \N_{\geq 2}$ and let $(M_1,\partial M_1)$ and $(M_2,\partial
  M_2)$ be oriented compact connected $n$-manifolds with boundary.
  Let $T_1 \subset \partial M_1$ and $T_2 \subset \partial M_2$
  be $\pi_1$-injective components of~$\partial M_1$ and $\partial M_2$, respectively,
  that are homeomorphic to the torus~$T^{n-1}$.
  Let $f \colon T_1 \longrightarrow T_2$ be an orientation-reversing
  homeomorphism, let
  \[ M := M_1 \cup_f M_2
  \]
  be the oriented compact connected $n$-manifold obtained by glueing~$M_1$
  and~$M_2$ along the boundary components~$T_1$ and~$T_2$ via~$f$,
  let $G := \pi(M_1) *_{\pi(f)} \pi(M_2)$ be the corresponding pushout
  groupoid, and let $\alpha$ be an essentially
  free standard $G$-space with
  \[ \ifsv{M_1,\partial M_1}^{\res^G_{\pi(M_1)} \alpha} = 0
     \quad\text{and}\quad
     \ifsv{M_2,\partial M_2}^{\res^G_{\pi(M_2)} \alpha} = 0.
  \]
  Then~$\ifsv {M,\partial M}^\alpha=0$. In particular, $\ifsv {M,\partial M} = 0$.
\end{prop}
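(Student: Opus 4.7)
The plan is to assemble small parametrised relative fundamental cycles on the two sides, absorb the mismatch on the glueing torus via the parametrised uniform boundary condition, and then certify the result via the local criterion.

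For each $\varepsilon>0$ I would first use the two vanishing hypotheses to pick relative parametrised fundamental cycles $z_i\in C_n(M_i;\res^G_{\pi(M_i)}\alpha)$ with $|z_i|_1<\varepsilon$. By Proposition~\ref{prop:boundaryrel}, $\partial z_i$ is a parametrised fundamental cycle of~$\partial M_i$, so its restriction $t_i$ to the glueing torus~$T_i$ is a parametrised fundamental cycle of~$T_i$. After identifying $T_1$ and $T_2$ to the common torus~$T\subset M$, the orientation-reversing nature of~$f$ forces $t_1$ and $t_2$ to represent opposite parametrised fundamental classes of~$T$, so their sum is null-homologous in $H_{n-1}(T;\alpha|_{\pi(T)})$. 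Because $T$ is $\pi_1$-injective in~$M$, the restriction $\alpha|_{\pi(T)}$ remains essentially free, so Proposition~\ref{prop:UBClocal} supplies a chain $b\in C_n(T;\alpha|_{\pi(T)})$ with $\partial b=t_1+t_2$ and $|b|_1 \leq K\cdot(n+1)\cdot 2\varepsilon$ for a uniform constant~$K$.

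Setting $z:=z_1+z_2-b\in C_n(M;\alpha)$, viewed in $C_*(M;\alpha)$ through the obvious inclusions coming from the groupoid pushout $G=\pi(M_1)*_{\pi(f)}\pi(M_2)$, the contributions on~$T$ cancel by construction, so $\partial z$ is supported in $\partial M$ and $z$ is a relative cycle with $|z|_1\leq (2+2K(n+1))\varepsilon$. To show that $z$ in fact represents $\fcl{M,\partial M}^\alpha$, I would apply the local criterion (Proposition~\ref{prop:localcrit}) with a small compact connected submanifold $U\subset M_1^\circ$ disjoint from~$T$: both $z_2$ and $b$ are supported in $M\setminus U^\circ$ and therefore vanish in $H_n(M,M\setminus U^\circ;\linfz\alpha)$, so $z$ represents the same class there as $z_1$; by naturality of excision this class is the image of $\fcl{M_1,\partial M_1}^{\res^G_{\pi(M_1)}\alpha}$, which equals $\varphi[U,\partial U]_{\alpha'}$ by the local criterion applied inside $M_1$. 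Letting~$\varepsilon\to 0$ gives $\ifsv{M,\partial M}^\alpha=0$, and the unparametrised estimate $\ifsv{M,\partial M}=0$ follows by infimising over standard $G$-spaces.

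The main obstacle I anticipate is the careful bookkeeping of coefficient systems and orientations along the glueing: verifying that the restrictions of~$\alpha$ via $M_1$, $M_2$, and~$T$ are consistent through the groupoid pushout, that the orientation-reversing glueing really makes $t_1+t_2$ (rather than $t_1-t_2$) null-homologous on~$T$, and that the two inclusions $C_*(M_i;\res^G_{\pi(M_i)}\alpha)\hookrightarrow C_*(M;\alpha)$ agree on $T$-supported chains in the way required for the UBC cancellation to carry through. Once this is nailed down, the norm estimate and the local-criterion check mirror the template used in the proof of Proposition~\ref{prop:simvoltame}.
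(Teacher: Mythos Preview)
Your proposal is correct and follows essentially the same route as the paper: pick small parametrised relative fundamental cycles on each piece, use Proposition~\ref{prop:boundaryrel} and the orientation-reversing glueing to see that the sum of their restrictions to the torus is null-homologous, fill via Proposition~\ref{prop:UBClocal}, and certify the result with Proposition~\ref{prop:localcrit}. Your write-up is in fact a bit more explicit than the paper's on the local-criterion check and on the bookkeeping for the coefficient restrictions; both are handled exactly as you anticipate.
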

\begin{proof}
  We proceed as in the case with a single boundary
  component~\cite[Proposition~10.3]{fauserloeh}:   
  In order to keep the notation simple, we view $M_1$ and $M_2$ as
  subspaces of~$M$ and identify~$T_1$ and~$T_2$ via~$f$.

  By the Seifert and van Kampen theorem for fundamental groupoids, the
  inclusions of~$M_1$ and $M_2$ into~$M$ induce an isomorphism~$G
  \cong \pi(M)$. Moreover, as the boundary components are
  $\pi_1$-injective, we also know that the canonical maps~$\pi(M_1)
  \longrightarrow \pi(M)$ and $\pi(M_2) \longrightarrow \pi(M)$ are
  injective at every base-point. Therefore, the restrictions $\alpha_1 :=
  \res^G_{\pi(M_1)} \alpha$ and $\alpha_2:= \res^G_{\pi(M_2)} \alpha$
  of the essentially free $G$-space~$\alpha$ are essentially free;
  hence, also $\alpha_0 := \res^G_{\pi(T_1)}\alpha = \res^G_{\pi(T_2)} \alpha$ is
  essentially free.

  Let $K \in \R_{>0}$ be an $(n-1)$-UBC constant for~$C_*(T_1;\alpha_0)$
  (Proposition~\ref{prop:UBClocal}). Let $\varepsilon \in \R_{>0}$.
  Because of~$\ifsv{M_1,\partial M_1}^{\alpha_1} = 0$ and $\ifsv{M_2,\partial M_2}^{\alpha_2} = 0$
  there exist parametrised relative fundamental cycles~$c_1 \in C_n(M_1;\alpha_1)$
  as well as $c_2 \in C_n(M_2;\alpha_2)$ with
  \[ |c_1|_1 \leq \varepsilon
     \quad\text{and}\quad
     |c_2|_1 \leq \varepsilon.
  \]
  Then
  \[ c_0 := (\partial c_1) |_{T_1} + (\partial c_2) |_{T_2} \in C_{n-1}(T_1;\alpha_0)
  \]
  is a null-homologous cycle in~$C_{n-1}(T_1;\alpha_0)$ (because the glueing
  map~$f$ is orientation-reversing and $(\partial c_1)|_{T_1}$ and $(\partial c_2)|_{T_2}$ are
  $\alpha_0$-parametrised fundamental cycles of~$T_1$ by Proposition~\ref{prop:boundaryrel});
  by construction, 
  \[ |c_0|_1  \leq 2 \cdot (n+1) \cdot \varepsilon.
  \]
  By the uniform boundary condition on the torus~$T_1 = T_2$, there exists
  a chain~$b \in C_n(T_1;\alpha_0)$ with
  \[ \partial b = c_0
     \quad\text{and}\quad
     |b|_1 \leq K \cdot |c_0|_1.
  \]
  Then
  $c:= c_1 + c_2 - b \in C_n(M;\alpha)
  $ 
  is a cycle that
  satisfies
  \[ |c|_1 \leq 2\cdot \varepsilon + K \cdot 2 \cdot (n+1) \cdot \varepsilon.
  \]
  Moreover, $c$ is an $\alpha$-parametrised relative fundamental cycle
  of~$(M,\partial M)$ (Proposition~\ref{prop:localcrit}).
  
  Taking the infimum over all~$\varepsilon \in \R_{>0}$ shows that
  $\ifsv {M,\partial M}^\alpha = 0$.
\end{proof}

\begin{prop}[self-glueing along tori]\label{prop:selfglueing}
  Let $n \in \N_{\geq 2}$ and let $(M,\partial M)$ be an oriented
  compact connected $n$-manifold with boundary. Let $T_1,T_2 \subset \partial M$
  be two different $\pi_1$-injective components of~$\partial M$ that
  are homeomorphic to the torus~$T^{n-1}$. Let $f \colon T_1 \longrightarrow T_2$
  be an orientation-reversing homeomorphism, let
  \[ N := M / (T_1 \sim_f T_2)
  \]
  be the oriented compact connected $n$-manifold obtained by glueing~$M$
  to itself along~$T_1, T_2$ via~$f$, let $G := \pi(M) *_{\pi(f)}$ be the
  corresponding HNN-extension groupoid, and let $\alpha$ be an essentially free standard
  $G$-space with
  \[ \ifsv{M,\partial M}^{\res^G_{\pi(M)}\alpha} = 0.
  \]
  Then $\ifsv {N,\partial N}^\alpha = 0$. In particular, $\ifsv {N,\partial N} = 0$.
\end{prop}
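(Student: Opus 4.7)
The plan is to mimic the proof of Proposition~\ref{prop:glueing} essentially verbatim, with the two manifolds~$M_1,M_2$ collapsed into a single manifold~$M$ that is glued to itself. The only conceptual change is that the single small parametrised relative fundamental cycle of~$(M,\partial M)$ now contributes fundamental cycles on \emph{both} tori~$T_1$ and~$T_2$, which after identification via~$f$ live on a single torus in~$N$.

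First, I would set up the groupoid picture. By the Seifert--van Kampen theorem for fundamental groupoids applied to the HNN construction, the inclusion~$M \hookrightarrow N$ induces an isomorphism~$G \cong \pi(N)$; moreover, since $T_1, T_2$ are $\pi_1$-injective in~$M$, the canonical map~$\pi(M) \longrightarrow \pi(N)$ is injective at every basepoint. Consequently, the restrictions $\alpha_M := \res^G_{\pi(M)} \alpha$ and $\alpha_0 := \res^G_{\pi(T_1)} \alpha = \res^G_{\pi(T_2)} \alpha$ are essentially free standard spaces over~$\pi(M)$ and~$\pi(T_1)$, respectively. Let $K \in \R_{>0}$ be an $(n-1)$-UBC constant for~$C_*(T_1;\alpha_0)$ provided by Proposition~\ref{prop:UBClocal}.

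Now fix~$\varepsilon \in \R_{>0}$. By the vanishing hypothesis, there exists an $\alpha_M$-parametrised relative fundamental cycle~$c_M \in C_n(M; \alpha_M)$ with $|c_M|_1 \leq \varepsilon$. Under the quotient map~$M \longrightarrow N$, the restrictions $(\partial c_M)|_{T_1}$ and $(\partial c_M)|_{T_2}$ both map into the same torus~$T \subset N$, and by Proposition~\ref{prop:boundaryrel} each is an $\alpha_0$-parametrised fundamental cycle of the corresponding torus. Since $f$ is orientation-reversing, the sum
\[ c_0 := (\partial c_M)|_{T_1} + (\partial c_M)|_{T_2} \in C_{n-1}(T;\alpha_0)
\]
represents the zero class in~$H_{n-1}(T;\alpha_0)$, and $|c_0|_1 \leq 2\cdot(n+1)\cdot\varepsilon$. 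Applying UBC, choose $b \in C_n(T;\alpha_0)$ with~$\partial b = c_0$ and $|b|_1 \leq K \cdot |c_0|_1$. Set
\[ c := c_M - b \in C_n(N;\alpha),
\]
viewing chains in~$M$ and~$T$ as chains in~$N$ via the quotient map. Then $|c|_1 \leq \varepsilon + 2K(n+1)\varepsilon$, and by construction $\partial c$ is supported on~$\partial N$.

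Finally, I would verify that $c$ is an $\alpha$-parametrised relative fundamental cycle of~$(N,\partial N)$ using the local criterion (Proposition~\ref{prop:localcrit}): pick any small embedded compact $n$-submanifold~$U$ in the interior of the image of~$M^\circ$ inside~$N$; the chain~$c$ agrees with~$c_M$ on~$U$ (the correction $b$ is supported on the glueing torus, disjoint from~$U$), and $c_M$ is already a parametrised relative fundamental cycle of~$(M,\partial M)$, so it represents the correct class in~$H_n(N, N \setminus U^\circ; \linfz \alpha)$. Letting $\varepsilon \to 0$ gives $\ifsv{N,\partial N}^\alpha = 0$, and the statement for the unparametrised relative integral foliated simplicial volume follows by taking the infimum over standard $G$-spaces. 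The one step requiring minor care is confirming that the two boundary contributions indeed cancel up to a boundary on the single torus~$T$ in~$N$; this is exactly where the orientation-reversing nature of~$f$ is used, as in the two-piece case.
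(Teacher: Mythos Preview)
Your proposal is correct and follows exactly the approach the paper itself prescribes: the paper's proof of this proposition is a one-sentence remark that one proceeds ``as in the proof of Proposition~\ref{prop:glueing}'' by taking a small parametrised relative fundamental cycle of~$(M,\partial M)$ and using UBC on the glueing torus, which is precisely what you spell out. One cosmetic point: since there is only a single cycle~$c_M$, the bound~$|c_0|_1 \leq (n+1)\varepsilon$ already holds (rather than~$2(n+1)\varepsilon$), but this overestimate is harmless.
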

\begin{proof}
  As in the proof of Proposition~\ref{prop:glueing}, one can take 
  a small parametrised relative fundamental cycle of~$(M,\partial M)$,
  and then use the uniform boundary condition on~$T_1 \cong T_2$ to construct
  a small parametrised relative fundamental cycle of~$(N,\partial N)$.  
\end{proof}

\subsection{Proof of Theorem~\ref{thm:mainergodic}}

\begin{proof}[Proof of Theorem~\ref{thm:mainergodic}]
  If $\Gamma$ is a graph of manifolds, then instead of performing
  all glueings in the geometric realisation~$M(\Gamma)$ at once,
  we can also do them step by step, glueing one pair of boundary tori
  after another. Therefore, we can prove Theorem~\ref{thm:mainergodic}
  by induction over the number of edges of~$\Gamma$.

  The base case of this induction is a graph of manifolds without
  edges, i.e., a disjoint union of vertex manifolds; this case is handled
  in Proposition~\ref{prop:simvoltame}.

  In the induction step, we have to distinguish two cases:
  \begin{itemize}
  \item In case of a glueing corresponding to an edge connecting two
    different connected components of the underlying graph, we apply
    Proposition~\ref{prop:glueing}.
  \item In case of a glueing corresponding to an edge connecting
    vertices in the same connected component of the underlying graph
    (this includes the case of loops), we apply
    Proposition~\ref{prop:selfglueing}.
    \qedhere
  \end{itemize}  
\end{proof}

\subsection{Proof of Theorem~\ref{thm:main}}

\begin{proof}[Proof of Theorem~\ref{thm:main}]
  Let $\alpha$ be the standard $\pi_1(M)$-space given by the profinite
  completion of the residually finite group~$\pi_1(M)$~\cite[Section~2.1]{flps}.
  Then \cite[Theorem~6.6, Remark~6.7]{loehpagliantini}
  \[ \stisv M = \ifsv M ^\alpha.
  \]
  On the other hand, $\ifsv M ^\alpha = 0$, by
  Theorem~\ref{thm:mainergodic} (if $\pi_1(M)$ is residually finite,
  the action on the profinite completion is free). Therefore, $\stisv
  M = 0$. Because of the
  sandwich~\cite[Proposition~6.1]{loehpagliantini}
  \[ 0 \leq \sv M \leq \stisv M = 0,
  \]
  we also obtain~$\sv M = 0$.
  
  In addition, if $\bigcap_{j \in \N} \Gamma_j = \{1\}$, then 
  the action~$\beta$ of~$\pi_1(M)$ on the corresponding coset tree is
  a free standard $\pi_1(M)$-space and~\cite[Theorem~2.6]{flps}
  \[ \lim_{j \rightarrow \infty} \frac{\isv {M_j}}{[\pi_1(M):\Gamma_j]}
     = \ifsv M^\beta.
  \]
  Moreover, $\ifsv M ^\beta = 0$ by Theorem~\ref{thm:mainergodic}.   
\end{proof}


\medskip
\vfill

\noindent
\emph{Daniel Fauser}\\
\emph{Stefan Friedl}\\
\emph{Clara L\"oh}\\[.5em]
  {\small
  \begin{tabular}{@{\qquad}l}
    Fakult\"at f\"ur Mathematik, 
    Universit\"at Regensburg, 
    93040 Regensburg\\
    \textsf{daniel.fauser@mathematik.uni-r.de},  
    \textsf{http://www.mathematik.uni-r.de/fauser}\\
    \textsf{stefan.friedl@mathematik.uni-r.de},
    \textsf{http://www.mathematik.uni-r.de/friedl}\\
    \textsf{clara.loeh@mathematik.uni-r.de}, 
    \textsf{http://www.mathematik.uni-r.de/loeh}
  \end{tabular}}
\end{document}